\newtheorem{theorem}{Theorem}[section]
\newtheorem{proposition}[theorem]{Proposition}
\newtheorem{lemma}[theorem]{Lemma}
\newtheorem{corollary}[theorem]{Corollary}
\theoremstyle{definition}
\numberwithin{equation}{section}  
\numberwithin{equation}{section}
\def\p{\partial}
\title[Free-boundary 3D compressible Euler equations  in vacuum]
{A priori estimates for the free-boundary 
 3D compressible Euler equations in physical vacuum}
\author[D. Coutand]{Daniel Coutand}
\author[H. Lindblad]{Hans Lindblad} 
\author[S. Shkoller]{Steve Shkoller}
\address{CANPDE, Maxwell Institute for Mathematical Sciences and department of Mathematics, Heriot-Watt University, Edinburgh, EH14 4AS, UK}
\address{Department of Mathematics, University of California, San Diego, CA 92093}
\address{Department of Mathematics, University of California, Davis, CA 95616, USA}
\subjclass{35L65, 35L70, 35L80, 35Q35, 35R35, 76B03}
\keywords{compressible Euler equations,  gas dynamics, free boundary problems, physical vacuum, characteristic hyperbolic systems,
degenerate hyperbolic systems, systems of conservation laws}
\email{D.Coutand@ma.hw.ac.uk}
\email{lindblad@math.ucsd.edu}
\email{shkoller@math.ucdavis.edu}
\begin{document}

\begin{abstract}
We prove a priori estimates for the three-dimensional compressible Euler equations with moving {\it physical} vacuum boundary,
with an equation of state given by $p(\rho) = C_\gamma \rho^\gamma $ for $\gamma >1$.
The vacuum condition necessitates the vanishing of the pressure, and hence density, on the dynamic  boundary, which creates
a degenerate and characteristic hyperbolic {\it free-boundary} system to which standard methods of symmetrizable hyperbolic equations cannot be applied. 
\end{abstract}

\maketitle
{\small 
\tableofcontents}

\section{Introduction}
\label{sec_introduction}

\subsection{The compressible Euler equations in Eulerian variables}
For $0 \le t \le T$,
the evolution of a three-dimensional  compressible gas  moving inside of a dynamic vacuum  boundary is modeled by the one-phase compressible Euler equations:
\begin{subequations}
  \label{ceuler}
\begin{alignat}{2}
\rho[u_t+ u\cdot Du] + D p(\rho)&=0  &&\text{in} \ \ \Omega(t) \,, \label{ceuler.a}\\
 \rho_t+ {\operatorname{div}}  (\rho u) &=0 
&&\text{in} \ \ \Omega(t) \,, \label{ceuler.b}\\
p &= 0 \ \ &&\text{on} \ \ \Gamma(t) \,, \label{ceuler.c}\\
\mathcal{V} (\Gamma(t))& = u \cdot n &&\ \ \label{ceuler.d}\\
(\rho,u)   &= (\rho_0,u_0 ) \ \  &&\text{on} \ \ \Omega(0) \,, \label{ceuler.e}\\
   \Omega(0) &= \Omega\,.  && \label{ceuler.f}
\end{alignat}
\end{subequations}
The open, bounded subset
 $\Omega(t) \subset \mathbb{R}^3  $ denotes the changing volume occupied by the gas,  $\Gamma(t):= \partial\Omega(t)$ denotes
 the moving vacuum boundary, $ \mathcal{V} (\Gamma(t))$ denotes normal
 velocity of $\Gamma(t)$, and $n$ denotes the exterior unit normal vector to $\Gamma(t)$.
  The vector-field $u = (u_1,u_2,u_3)$ denotes the Eulerian velocity
field, $p$ denotes the pressure function, and $\rho$ denotes the density of the gas.
The equation of state $p(\rho)$ is given by
\begin{equation}\label{eos}
p(x,t)= C_\gamma\, \rho(x,t)^\gamma\ \  \text{ for } \ \  \gamma> 1 ,
\end{equation} 
where $C_\gamma $ is the adiabatic constant which we set to unity, and
$$
\rho>0 \ \text{ in } \ \Omega(t) \ \ \ \text{ and } \ \ \ \rho=0 \ \text{ on } \Gamma(t) \,.
$$
Equation (\ref{ceuler.a}) is the conservation of momentum; (\ref{ceuler.b}) is the conservation of mass; the boundary
condition (\ref{ceuler.c}) states that pressure (and hence density)  vanish along the vacuum boundary; (\ref{ceuler.d}) states that the
vacuum boundary is moving with the normal component of the fluid velocity, and (\ref{ceuler.e})-(\ref{ceuler.f}) are the 
initial conditions for the density, velocity, and domain.  Using the equation of state (\ref{eos}),
 (\ref{ceuler.a}) is written as
\begin{alignat}{2}
\rho [u_t+ u\cdot Du]+  D\rho^{\gamma} &=0 \ \ \   &&\text{in} \ \ \Omega(t) \,. \tag{ \ref{ceuler.a}'} 
\end{alignat}

\subsection{Physical vacuum}  \label{subsec_physicalvacuum}
With the sound speed given by $c := \sqrt{\p p/ \p \rho}$ and $N$ denoting the {\it outward} unit normal to $\Gamma$, satisfaction of the condition
\begin{equation}\label{phys_vac}
\frac{ \partial c_0^2}{ \partial N} < 0 \text{ on } \Gamma
\end{equation} 
defines a {\it physical vacuum} boundary (see \cite{Lin1987}, \cite{Liu1996}, \cite{LiYa1997}, \cite{LiYa2000},
\cite{LiSm1980}, \cite{XuYa2005}), where $c_0 = c|_{t=0}$.  The physical vacuum condition (\ref{phys_vac}) is equivalent to the
requirement that
\begin{equation}\label{stab1}
\frac{ \p \rho_0^{\gamma-1}} { \partial N} < 0 \text{ on } \Gamma \,.
\end{equation}
Since $ \rho_0 >0$ in $\Omega$, (\ref{stab1}) implies that for some positive constant $C$ and $x\in \Omega$ near the vacuum boundary $\Gamma$,
\begin{equation}\label{degen}
\rho_0^{\gamma-1}(x) \ge  C \text{dist}(x, \Gamma) \,.
\end{equation} 

Because of condition (\ref{degen}), the compressible Euler system (\ref{ceuler}) is a {\it degenerate} and  {\it characteristic} hyperbolic
system to which standard methods of symmetric hyperbolic conservation laws cannot be applied.

We note that by
choosing a lower-bound with a faster rate of degeneracy such as, for example,  $ \operatorname{dist}(x, \Gamma(t))^b$ 
for $b=2,3,....$, the analysis becomes significantly easier; for instance, if  $b=2$,
then $\frac{D\rho_0^{\gamma-1}(x,t)}{\sqrt{ \rho_0^{\gamma-1}(x,t)}}$ is bounded for all $x \in \Omega$. This bound makes it possible to
easily control error terms in energy estimates, and in effect removes the singular behavior associated with the physical vacuum condition
(\ref{degen}).

\subsection{Fixing the domain and the Lagrangian variables on $\Omega$}   We transform the system (\ref{ceuler}) into
Lagrangian variables.
We let $\eta(x,t)$ denote the ``position'' of the gas particle $x$ at time $t$.  Thus,
\begin{equation}
\nonumber
\begin{array}{c}
\partial_t \eta = u \circ \eta $ for $ t>0 $ and $
\eta(x,0)=x
\end{array}
\end{equation}
where $\circ $ denotes composition so that
$[u \circ \eta] (x,t):= u(\eta(x,t),t)\,.$  
We set 
\begin{align*}
v &= u \circ \eta   \text{ (Lagrangian velocity)},  \\
f&= \rho \circ \eta   \text{ (Lagrangian density)}, \\
A &= [D \eta]^{-1}  \text{ (inverse of deformation tensor)}, \\
J &= \det D \eta  \text{ (Jacobian determinant)}, \\
a &= J\, A  \text{ (tranpose of cofactor matrix)}. 
\end{align*}
Using  Einstein's summation convention defined in Section \ref{subsec_einstein} below, and using
the notation $F,_k$ to denote $\frac{\p F}{ \p x_k}$, the k$th$-partial derivative of $F$ for $k=1,2,3$,
the Lagrangian version of equations (\ref{ceuler.a})-(\ref{ceuler.b}) can be written on
the fixed reference domain $\Omega$ as
\begin{subequations}
\label{ceuler00}
\begin{alignat}{2}
f  v^i_t + A^k_i f^ \gamma ,_k &=0 \ \ && \text{ in } \Omega \times (0,T] \,, \label{ceuler00.a} \\
f _t + f A^j_i v^i,_j  &=0 \ \ && \text{ in } \Omega \times (0,T] \,,\label{ceuler00.b}  \\
f  &=0 \ \ && \text{ in } \Omega \times (0,T] \,,\label{ceuler00.c}  \\
(f,v,\eta)  &=(\rho_0, u_0, e) \ \  \ \ && \text{ in } \Omega \times \{t=0\} \,, \label{ceuler00.d} 
\end{alignat}
\end{subequations}
where $e(x)=x$ denotes the identity map on $\Omega$.

Since $J_t = J A^j_i v^i,_j$ and since $J(0)=1$ (since we have taken $\eta(x,0)=x$), it follows that
\begin{equation}\label{J}
f = \rho_0 J^{-1},
\end{equation}
so that the initial density function $\rho_0$ can be viewed as a parameter in the Euler equations.   Let $\Gamma:= \partial \Omega$ denote
the initial vacuum boundary;  using, that $A^k_i = J ^{-1} \, a^k_i$, we write the compressible Euler equations (\ref{ceuler00}) as
\begin{subequations}
\label{ceuler0}
\begin{alignat}{2}
\rho_0 v^i_t + a^k_i (\rho_0^ \gamma J^{-\gamma}),_k &=0 \ \ && \text{ in } \Omega \times (0,T] \,, \label{ceuler0.a} \\
(\eta, v)  &=( e, u_0) \ \  \ \ && \text{ in } \Omega \times \{t=0\} \,, \label{ceuler0.b} \\
\rho_0^{\gamma-1}& = 0 \ \ &&\text{ on }  \Gamma \,, \label{ceuler0.c}  
\end{alignat}
\end{subequations}
with $ \rho _0^{ \gamma -1}(x) \ge C \operatorname{dist}( x, \Gamma ) $ for $x \in\Omega$ near $\Gamma$.

\subsection{Setting $\gamma=2$}  It should be clear from equations (\ref{ceuler0}) that by introducing new variables for
both $\rho_0^{\gamma-1}$ and $J^{\gamma-1}$, such as the enthalpy for example, we can always return to the case that $\gamma=2$.  Henceforth, we seek solutions $\eta$ to the following system:
\begin{subequations}
  \label{ce0}
\begin{alignat}{2}
\rho_0  v_t^i + a^k_i (\rho_0^2 J^{-2}),_k&=0  &&\text{in} \ \ \Omega \times (0,T] 
\,, \label{ce0.a}\\
(\eta,v)&= (e,u_0 ) \ \ \ &&\text{on} \ \ \Omega \times \{t=0\} \,, \label{ce0.b}\\
\rho_0& = 0 \ \ &&\text{ on }  \Gamma \,, \label{ce0.c}
\end{alignat}
\end{subequations}
with $ \rho _0(x) \ge C \operatorname{dist}( x, \Gamma ) $ for $x \in\Omega$ near $\Gamma$.

The equation
(\ref{ce0.a}) is equivalent to 
\begin{equation}
v_t^i + 2A^k_i (\rho_0 J^{-1} ),_k =0 \label{ce_vor} \,,
\end{equation} 
and (\ref{ce_vor}) can be written as
 
\begin{equation}
v_t^i +\rho_0 a^k_i J^{-2},_k + \rho_0,_3 a^3_i J^{-2}  =0 \label{ce_elliptic} \,.
\end{equation} 
Because of the degeneracy caused by $\rho_0 =0$ on $\Gamma$, all three equivalent forms of the compressible
Euler equations are crucially used in our analysis.  The equation (\ref{ce0.a}) is used for energy estimates, while (\ref{ce_vor}) is
used for estimates of the vorticity, and (\ref{ce_elliptic}) is used for additional elliptic-type estimates used to recover the bounds
for normal derivatives.

\subsection{The reference domain $\Omega$}\label{subsec_domain}
 To avoid the use of local coordinate charts necessary for arbitrary geometries,
for simplicity, we will assume  that the initial domain $\Omega \subset \mathbb{R}  ^3$ at time $t=0$ is given by
$$
\Omega = \{ (x_1,x_2,x_3) \in \mathbb{R}^3  \ | \ (x_1, x_2) \in  \mathbb{T}  ^2 , \ x_3\in (0,1) \} \,,
$$
where $\mathbb{T}  ^2$ denotes the $2$-torus, which can be thought of as the unit square with periodic
boundary conditions.  This permits the use of {\it one} global Cartesian coordinate system. At $t=0$, the reference {\it vacuum} boundary  is the
{\it top} boundary
$$\Gamma =\{ x_3=1\}\,,$$
while the {\it bottom} boundary $\{x_3=0\}$ is fixed with boundary condition
$$
u^3=0 \text{ on } \{ x_3 =0 \} \times [0,T] \,.
$$
The moving vacuum boundary is then given by
$$
\Gamma(t) = \eta(t)(\Gamma) = \eta(x_1,x_2,1,t) \,.
$$

 \subsection{The higher-order energy function} The physical energy $  \int_\Omega \bigl[{\frac{1}{2}}  \rho_0 |v|^2 +  \rho_0^2 J^{-1} \bigl]dx$ is a conserved quantity, but is far too weak for the
purposes of constructing solutions; instead, we consider the higher-order energy function
\begin{align} 
E(t) & = \sum_{a=0}^4 \| \p_t^{2a}  \eta(t)\|^2_{4-a} + \sum_{a=0}^4 \Bigl[\| \rho_0 \bar\p^{4-a}\p_t^{2a} D\eta(t)\|^2_0
+\| \sqrt{\rho_0} \bar\p^{4-a}\p_t^{2a}  v(t)\|^2_0 \Bigr] \nonumber\\
& \qquad\qquad  
+ \sum_{a=0}^3 \| \rho_0 \p_t^{2a}  J^{-2} (t)\|^2_{4-a} + \| \operatorname{curl} _\eta v(t)\|^2_ 3 
+ \| \rho_0 \bar \p^4 \operatorname{curl} _\eta v(t)\|^2_0 \,, \label{formal}
\end{align} 
where $\bar \p = \left(\frac{\p}{\p x_1}, \frac{\p}{\p x_2}\right).$   Section \ref{notation} explains the notation.

While this function is
not conserved, it is possible to show that $ \sup_{t \in [0,T]} E(t)$ remains bounded for sufficiently smooth solutions of (\ref{ce0}), whenever $T>0$ is taken sufficiently small;   the bound depends only on $E(0)$.

 \subsection{Main Result}   
\begin{theorem} \label{theorem_main}
 Suppose that $\eta(t)$ is a smooth solution of (\ref{ce0}) on a time interval $[0,\bar T]$. Then for $0< T \le T_0$ taken sufficiently small, the energy
 function $E(t)$ constructed from the solution $\eta(t)$ satisfies the {\it a priori} estimate
 $$
 \sup_{t \in [0,T]} E(t) \le M_0 \,,
 $$
where $M_0$ and $T_0$ is a function of $E(0)$.
\end{theorem}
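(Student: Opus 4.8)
The plan is to run a continuity/bootstrap argument. Assume that on a maximal subinterval $[0,T']\subset[0,T]$ one has $\sup_{[0,T']}E(t)\le 2M_0$, and aim to improve this to $\sup_{[0,T']}E(t)\le M_0$. To do so I will establish an estimate of the form $E(t)\le C_0\,E(0)+T\,\mathcal P\bigl(\sup_{[0,T']}E\bigr)$ for all $t\in[0,T']$, where $\mathcal P$ is a fixed polynomial and $C_0$ a universal constant; choosing $M_0:=2C_0E(0)+1$ and then $T_0$ small (depending only on $M_0$) closes the bootstrap, since $t\mapsto E(t)$ is continuous and $E(0)\le M_0/2$. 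Two facts are used throughout: for $T$ small, $J$ and $A$ stay close to $1$ and to $\mathrm{Id}$, so the $\eta$-geometry is a small perturbation of the Euclidean one; and the $\rho_0$-weighted spaces appearing in $E(t)$ support Hardy-type and degenerate Sobolev inequalities (e.g.\ $\|w/\sqrt{\rho_0}\|_0\lesssim\|w\|_1$ when $w|_\Gamma=0$), which is what compensates for the loss of one derivative forced by $\rho_0\sim\operatorname{dist}(\cdot,\Gamma)$.

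\emph{Tangential and time-differentiated energy estimates.} For each $0\le a\le4$ I apply the operator $\bar\p^{4-a}\p_t^{2a}$ to \eqref{ce0.a}, test against $\bar\p^{4-a}\p_t^{2a}v^i$, and integrate over $\Omega\times(0,t)$. The term $\rho_0\,\bar\p^{4-a}\p_t^{2a+1}v^i$ produces $\tfrac12\|\sqrt{\rho_0}\,\bar\p^{4-a}\p_t^{2a}v(t)\|_0^2$ up to time-integrated commutators with $\rho_0$. For $a^k_i\,\bar\p^{4-a}\p_t^{2a}(\rho_0^2J^{-2}),_k$ I integrate by parts in $x_k$; the boundary integral vanishes because $\rho_0=0$ on $\Gamma$ and because the slip condition $v^3=0$ on $\{x_3=0\}$ kills the bottom contribution. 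Using the Piola identity $a^k_i,_k=0$, the identity $a^k_i v^i,_k=J_t$, and $\bar\p^{4-a}\p_t^{2a}J^{-2}=-2J^{-3}\bar\p^{4-a}\p_t^{2a}J+(\text{lower order})$, the principal part telescopes into $\tfrac{d}{dt}$ of a positive multiple of $\|\rho_0\,\bar\p^{4-a}\p_t^{2a}J(t)\|_0^2$, which (since $\bar\p^{m}J$ equals $\bar\p^{m}D\eta$ contracted with cofactors, plus lower order) is equivalent to $\|\rho_0\,\bar\p^{4-a}\p_t^{2a}D\eta(t)\|_0^2$. All remaining terms are commutators $[\bar\p^{4-a}\p_t^{2a},a^k_i]$ and genuine products of lower-order factors, each bounded after time integration by $T\,\mathcal P(\sup E)$ via the interpolation and embedding estimates of Section~\ref{notation}. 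Applying \eqref{ce0.a} with $\p_t^{2a}$ only and the relation between $(J^{-2}),_k$ and $D\eta$ similarly controls $\sum_{a\le3}\|\rho_0\,\p_t^{2a}J^{-2}(t)\|_{4-a}^2$; the undifferentiated estimate and its low-order time derivatives feed the unweighted $\sum_{a}\|\p_t^{2a}\eta\|_{4-a}^2$.

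\emph{Vorticity and elliptic recovery, then closing.} Applying the Lagrangian curl $\operatorname{curl}_\eta$ to \eqref{ce_vor} annihilates the forcing, which is the $\eta$-gradient of the scalar $\rho_0J^{-1}$; hence $\operatorname{curl}_\eta v$ obeys a forced transport identity $\p_t(\operatorname{curl}_\eta v)=\mathcal Q$ with $\mathcal Q$ bilinear in $(Dv,A)$, and differentiating by $\bar\p^3$ (unweighted) and by $\rho_0\bar\p^4$ (weighted) and integrating in time yields $\|\operatorname{curl}_\eta v(t)\|_3^2+\|\rho_0\bar\p^4\operatorname{curl}_\eta v(t)\|_0^2\le E(0)+T\,\mathcal P(\sup E)$. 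Steps so far control tangential derivatives, $\operatorname{div}_\eta v$ (through $J_t$), and $\operatorname{curl}_\eta v$, but not the normal derivatives of $\eta$; these I recover from the degenerate-elliptic form \eqref{ce_elliptic}, i.e.\ $\rho_0\,a^k_iJ^{-2},_k=-v^i_t-\rho_0,_3\,a^3_iJ^{-2}$, whose left side is a variable-coefficient, $\rho_0$-degenerate second-order operator acting on $\eta$. Combining it order by order with a Hodge-type estimate — schematically $\|Dw\|_{s}\lesssim\|w\|_{0}+\|\operatorname{div}_\eta w\|_{s}+\|\operatorname{curl}_\eta w\|_{s}+\|\bar\p w\|_{s}$ and its $\rho_0$-weighted counterpart — one extracts the missing $\p_3$-derivatives; since $\rho_0$ degenerates only in $x_3$ near $\Gamma$, the gained derivative always appears multiplied by $\rho_0$, exactly as recorded in $E(t)$. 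Summing all estimates gives $E(t)\le C_0E(0)+T\,\mathcal P(\sup_{[0,T']}E)$ on $[0,T']$, and the continuity argument yields $\sup_{[0,T]}E\le M_0$ for $T\le T_0$.

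The main obstacle is the treatment, in both the energy step and the elliptic step, of every term in which a derivative falls on $\rho_0$. Because $\rho_0\sim\operatorname{dist}(\cdot,\Gamma)$ only to the \emph{first} power, the ratio $\rho_0,_3/\sqrt{\rho_0}$ is unbounded near $\Gamma$, so such terms cannot be absorbed by a naive Cauchy--Schwarz against $\|\sqrt{\rho_0}(\cdots)\|_0$; they must instead be shown to cancel against the boundary terms generated by integration by parts, or to reorganize into an exact $\tfrac{d}{dt}$ of an energy quantity, or to carry a favorable sign inherited from the physical vacuum condition $\rho_0,_3<0$ on $\Gamma$. Carrying this out coherently at every level $0\le a\le4$, while simultaneously checking that the weighted spaces built into $E(t)$ are strong enough to run the required Hardy and interpolation inequalities, is the technical core of the argument. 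This is precisely the difficulty that the remark following \eqref{degen} says disappears when the degeneracy rate is $\operatorname{dist}^b$ with $b\ge2$, since then $D\rho_0^{\gamma-1}/\sqrt{\rho_0^{\gamma-1}}$ is bounded; and it is why all three equivalent forms \eqref{ce0.a}, \eqref{ce_vor}, \eqref{ce_elliptic} are needed — no single one exposes enough cancellation on its own.
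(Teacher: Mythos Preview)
Your overall architecture---tangential/time energy identities, a transport estimate for $\operatorname{curl}_\eta v$, and a bootstrapping elliptic recovery of normal derivatives from \eqref{ce_elliptic}, closed by a continuity argument---matches the paper's. The continuity framing is equivalent to the polynomial inequality of Section~\ref{subsec_poly}. But the tangential energy step has a real gap.

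You write that after testing with $\bar\p^{4-a}\p_t^{2a}v$ and integrating by parts, ``the principal part telescopes into $\tfrac{d}{dt}$ of a positive multiple of $\|\rho_0\,\bar\p^{4-a}\p_t^{2a}J\|_0^2$, which\ldots is equivalent to $\|\rho_0\,\bar\p^{4-a}\p_t^{2a}D\eta\|_0^2$.'' This is not right: $\bar\p^m J = a^k_i\bar\p^m\eta^i,_k + \text{l.o.t.}$ is only the $\eta$-\emph{divergence} of $\bar\p^m\eta$, so at best you obtain $\|\rho_0\,\operatorname{div}_\eta\bar\p^m\eta\|_0^2$, not the full gradient. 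The missing piece is precisely the term you relegated to the ``commutators $[\bar\p^{4-a}\p_t^{2a},a^k_i]$'': when all derivatives land on $a^k_i$ you get $\int_\Omega(\bar\p^4 a^k_i)(\rho_0^2J^{-2}),_k\,\bar\p^4v^i\,dx$, which after integration by parts becomes $-\int_\Omega\rho_0^2J^{-2}\,\bar\p^4a^k_i\,\bar\p^4v^i,_k\,dx$. This scales like $(\rho_0\bar\p^4D\eta)\cdot(\rho_0\bar\p^4Dv)$ and carries \emph{no} factor of $T$; it cannot be absorbed as a remainder. In the paper this is the integral $\mathcal I_1$, and the point is an algebraic identity: using the differentiation formula \eqref{a1} and the antisymmetry $\bar\p^4\eta^r,_sa^s_i\bar\p^4v^i,_ka^k_r=\bar\p^4\eta^i,_sa^s_r\bar\p^4v^i,_ka^k_r+\text{(antisymmetric part)}$, one recognizes $\mathcal I_1$ as the time derivative of $\tfrac12\int\rho_0^2J^{-3}\bigl(|D_\eta\bar\p^4\eta|^2-|\operatorname{curl}_\eta\bar\p^4\eta|^2-|\operatorname{div}_\eta\bar\p^4\eta|^2\bigr)dx$ modulo genuine remainders. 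Only after summing this with $\mathcal I_2$ (your ``$J$-term'') and the curl estimate of Proposition~\ref{curl_est} does $\|\rho_0\bar\p^4 D\eta\|_0^2$ emerge. Without this cancellation mechanism the energy identity does not close.

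A secondary point: your elliptic recovery paragraph is too schematic at the one place where the physical-vacuum sign actually does work. The paper's mechanism (Proposition~\ref{pt5v_2d}) is to expand $\|\rho_0\, a^3_\cdot\,\p_t^6J^{-2},_3-2a^3_\cdot\,\p_t^6J^{-2}\|_0^2$ and integrate the cross term $-4\int\rho_0|a^3_\cdot|^2\,\p_t^6J^{-2}\,\p_t^6J^{-2},_3\,dx$ by parts in $x_3$; since $\rho_0,_3=-1$, this produces $-2\|a^3_\cdot\,\p_t^6J^{-2}\|_0^2$ plus nonnegative boundary and lower-order terms, and the net coefficient of the \emph{unweighted} $\|\p_t^6J^{-2}\|_0^2$ stays positive. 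This is exactly the ``favorable sign'' you allude to, but it is a specific computation (flagged in the paper's footnote as failing without sufficient smoothness), not a generic Hardy inequality, and it is what launches the bootstrap down to $\|\eta\|_4$.
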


Of course, our theorem also covers the case that $\Omega \subset  \mathbb{R}   ^d$ for $d=1$ or $2$, and by using a collection of
coordinate charts, we can allow arbitrary initial domains, as long as the initial boundary is of Sobolev class $H^{3.5}$.
We announced Theorem \ref{theorem_main} in \cite{CoLiSh2007}.

\subsection{History of prior results for the compressible Euler equations with vacuum boundary}  
We are aware of only a handful of previous theorems pertaining to the existence of solutions to the compressible and inviscid Euler equations with moving  vacuum boundary.   Makino
 \cite{M1986} considered compactly supported initial data, and treated the compressible  Euler equations for a gas as being set on  $\mathbb{R}^3  \times (0,T]$.  With
 his methodology, it is not possible to track the location of the vacuum boundary (nor is it necessary); nevertheless, an existence theory was developed in
 this context, by a variable change that permitted the standard theory of symmetric hyperbolic systems to be employed.  Unfortunately, the constraints on the data are too severe to allow for the evolution of the physical vacuum boundary.
 
 In \cite{Li2005b}, Lindblad proved existence and uniqueness for the 3D compressible Euler equations modeling a {\it liquid} rather than a gas.
 For a compressible liquid, the density $\rho>0$ is assumed to be a positive constant on the moving vacuum boundary $\Gamma(t)$ and is
 thus uniformly bounded below  by a positive constant.  As such, the compressible liquid provides a uniformly hyperbolic, but characteristic, system.  Lindblad used Lagrangian variables combined with Nash-Moser iteration to construct solutions.   More recently,  Trakhinin \cite{Tr2008}
provided an alternative proof for the existence of a compressible liquid, employing a solution strategy based on symmetric hyperbolic systems
combined with Nash-Moser iteration.  

The only existence theory for the physical vacuum singularity that we are aware of can be found in the recent paper by Jang and
Masmoudi \cite{JaMa2008} for the 1D compressible gas;  we refer the interested reader to  the introduction in that paper for a nice history of the analysis of the 1D compressible Euler equations with damping.

\subsection{Generalization of the isentropic gas assumption}  The  general form of the compressible Euler equations in three
space dimensions are the $5 \times 5$ system of conservation laws
\begin{subequations}
  \label{claw}
\begin{alignat}{2}
\rho[u_t+ u\cdot  D u] + Dp(\rho)&=0 \,,   \label{claw.a}\\
 \rho_t+ {\operatorname{div}}  (\rho u) &=0 \,, \label{claw.b} \\
  (\rho {\mathfrak E})_t+ {\operatorname{div}}  (\rho u\mathfrak{E} + p u) &=0 \,,  \label{claw.c}
\end{alignat}
\end{subequations}
where (\ref{claw.a}), (\ref{claw.b}) and (\ref{claw.c}) represent the respective conservation of  momentum, mass, and 
total energy.  Here, the quantity $\mathfrak{E}$ is the sum of contributions from the kinetic energy $ {\frac{1}{2}} |u|^2$, and
the internal energy $e$, i.e.,$\mathfrak{E}= {\frac{1}{2}} |u|^2 + e$.  For a single phase of compressible liquid or gas, $e$ becomes a
well-defined function of $\rho$ and $p$ through the theory of thermodynamics, $e = e(\rho, p)$.  Other interesting and useful
physical quantities, the temperature $T(\rho,p)$ and the entropy $S(\rho,p)$ are defined through the following consequence of the
second law of thermodynamics
$$
T\ dS = de = - \frac{p}{ \rho^2} \ d\rho  \,.
$$
For {\it ideal gases}, the quanities $e,T,S$ have the explicit formulae:
\begin{align*} 
e(\rho, p) & = \frac{p}{\rho(\gamma -1)} = \frac{T}{\gamma -1} \\
T(\rho,p) &= \frac{p}{\rho} \\
p &= e^S \rho ^ {\gamma}, \ \ \ \gamma > 1,  \ \text{ constant} \,.
\end{align*} 
In regions of smoothness, one often uses velocity and a convenient choice of two additional variables among the five
quantities $S,T,p,\rho,e$ as independent variables.  For the Lagrangian formulation, the entropy $S$ plays an important role, as it satisfies the
transport equation
\begin{equation}\nonumber
S_t + (u \cdot D )S =0 \,,
\end{equation} 
 and as such, $S \circ \eta = S_0$, where $S_0(x) = S(x,0)$ is the initial entropy function.   Thus, by replacing $f$ with $e^{S \circ \eta} \rho_0^{\gamma} J ^{-\gamma} $, our analysis for the isentropic case naturally generalizes to the $5 \times 5$ system of conservation laws.

\section{Notation and Weighted Spaces}\label{notation}

\subsection{Differentiation and norms in the open set $\Omega$} The reference domain $\Omega$ is defined in Section 
\ref{subsec_domain}.
 Throughout the paper the symbol $D$  will be used to 
 denote the three-dimensional gradient vector
 $$D=\left(\frac{\p}{\p x_1}, \frac{\p}{\p x_2},\frac{\p}{\p x_3}\right) \,.$$

For integers $k\ge 0$ and a smooth, open domain $\Omega$ of $\mathbb{R}  ^3$, 
we define the Sobolev space $H^k(\Omega)$ ($H^k(\Omega; {\mathbb R}^3 )$) to
be the completion of $C^\infty(\Omega)$ ($C^\infty(\Omega; {\mathbb R}^3)$) 
in the norm
$$\|u\|_k := \left( \sum_{|a|\le k}\int_\Omega \left|   \frac{\p^{a_1}}{\p x_{a_1}} \frac{\p^{a_2}}{\p x_{a_2}} \frac{\p^{a_3}}{\p x_{a_3}}u(x)
\right|^2 dx\right)^{1/2},$$
for a multi-index $a \in {\mathbb Z} ^3_+$, with the standard convention that  $|a|=a_1 +a_2+ a _3$. 
For real numbers $s\ge 0$, the Sobolev spaces $H^s(\Omega)$ and the norms $\| \cdot \|_s$ are defined by interpolation.
We will  write $H^s(\Omega)$ instead of $H^s(\Omega;{\mathbb R} ^3)$ 
for vector-valued functions.  In the case that $s\ge 3$, the above definition also holds for domains
 $\Omega$  of class $H^s$.

\subsection{Tangent and normal vectors to $\Gamma$}\label{normal_and_tangent} The outward-pointing unit normal vector to $\Gamma$ is given by
$$
N =(0,0,1) \,.
$$
Similarly, the unit tangent vectors on $\Gamma$ are given by
$$
T_ 1  = (1,0,0) \, \ \ \text{ and } \ \ 
T_ 2  = (0,1,0) \,.$$

\subsection{Einstein's summation convention} \label{subsec_einstein} Repeated Latin indices $i,j,k,$, etc., are summed
from $1$ to $3$, and repeated greek indices $ \alpha , \beta , \gamma $, etc., are summed from $1$ to 
$2$.   For example, $F,_{ii} := \sum_{i=1,3} \frac{\p^2}{\p x_i \p x_i}$, and $F^i,_ \alpha I^{\alpha
\beta}  G^i,_\beta :=
\sum_{i=1}^3 \sum_{\alpha=1}^2 \sum_{\beta=1}^2  \frac{ \partial F^i}{ \partial x_ \alpha } 
I^{\alpha \beta } \frac{ \partial G^i}{ \partial x_\beta} $.

\subsection{Sobolev spaces on $\Gamma$}   For  functions $u\in H^k(\Gamma)$, $k \ge 0$,  we set
$$|u|_k := \left( \sum_{|a|\le k}\int_\Omega \left|   \frac{\p^{a_1}}{\p x_{a_1}} \frac{\p^{a_2}}{\p x_{a_2}} u(x)
\right|^2 dx\right)^{1/2},$$
for a multi-index $a \in {\mathbb Z} ^2_+$.
   For real $s \ge 0$, the Hilbert space $H^s(\Gamma)$ and the boundary norm $| \cdot |_s$ is defined by interpolation.  The negative-order
Sobolev spaces $H^{-s}(\Gamma)$ are defined via duality: for  real $s \ge 0$,
$$
H^{-s}(\Gamma) := [ H^s(\Gamma)]' \,.
$$

\subsection{Notation for derivatives and norms}
Throughout the paper, we will use the following notation:
\begin{align*}
D & = \text{ three-dimensional gradient vector } = \left( \frac{ \partial }{ \partial x_1}  , \frac{ \partial }{ \partial x_2},
\frac{ \partial }{ \partial x_3} \right) \,, \\
\bar \p & = \text{ two-dimensional gradient vector or {\it horizontal derivative} } = \left( \frac{ \partial }{ \partial x_1}  , \frac{ \partial }{ \partial x_2} \right) \,, \\
\| \cdot \|_s & = \text{  $H^s(\Omega)$ interior norm}   \,, \\
| \cdot |_s & = \text{  $H^s(\Gamma)$ boundary norm}   \,.
\end{align*} 
The $k$th partial derivative of $F$ will be denoted by $F,_k = \frac{ \p F}{ \p x_k}$.

\subsection{The embedding of a weighted Sobolev space} Using $d$ to denote the distance function to
the boundary $\Gamma$,  and letting $p=1$ or $2$,
the weighted Sobolev space  $H^1_{d^p}(\Omega)$, with norm given by 
$\int_\Omega d(x)^p (|F(x)|^2+| DF (x)|^2 )\, dx$ for any $F \in H^1_{d^p}(\Omega)$, 
satisfies the following embedding:
$$H^1_{d^p}(\Omega) \hookrightarrow   H^{1 - \frac{p}{2}}(\Omega)\,,$$
so that there is a constant $C>0$ depending only on $\Omega$ and $p$ such that
\begin{equation}\label{w-embed}
\|F\|_{1-p/2} ^2 \le C \int_\Omega d(x)^p \bigl( |F(x)|^2 + \left| DF(x) \right|^2\bigr) \, dx\,.
\end{equation} 
See, for example,  Section 8.8 in Kufner \cite{K1985}.

\section{The  Lagrangian vorticity }

We make use of the permutation symbol 
$$
\varepsilon_{ijk} = \left\{\begin{array}{rl}
1, & \text{even permutation of } \{ 1, 2, 3\}, \\
-1, & \text{odd permutation of } \{ 1, 2, 3\}, \\
0, & \text{otherwise}\,,
\end{array}\right.
$$
and the basic identity regarding the $i$th component of the curl of a vector
field $u$:
$$
(\operatorname{curl}  u)_i = \varepsilon_{ijk} u^k,_j \,.
$$
The chain rule shows that
$$
(\operatorname{curl}  u(\eta))_i =  \operatorname{curl} _\eta v: = \varepsilon _{ijk} A^s_j v^k,_s \,,
$$
the right-hand side defining the Lagrangian curl operator $ \operatorname{curl} _ \eta $.  
Taking the Lagrangian curl of (\ref{ce_vor}) yields the Lagrangian
vorticity equation
\begin{equation} \label{vorticity}
\varepsilon _{kji} A^s_j  v_t^i,_s
=0\,, \ \ \text{ or } \ \ \operatorname{curl} _ \eta v_t =0 \,.
\end{equation}

\section{Properties of the determinant $J$, cofactor matrix $a$, unit normal $n$, and a polynomial-type inequality}
\subsection{Differentiating the Jacobian determinant}  The following identities will be
useful to us:
\begin{align}
{\bar\partial}J&=  a^s_r{\bar\partial} \frac{\partial \eta^r}{\partial x^s} {\text{ (horizontal differentiation )}}\,, \label{J1}\\
\partial_t  J&= a^s_r \frac{\partial v^r}{\partial x^s}  \ \ \text{ (time differentiation using $v=\eta_t$)} \,. \label{J2}
\end{align}

\subsection{Differentiating the cofactor matrix}  Using (\ref{J1}) and (\ref{J2}) and the fact that $a= J\, A$,
we find that
\begin{align}
{\bar\partial} a^k_i &=  {\bar\partial} \frac{\partial \eta^r}{\partial x^s} J^{-1}
[a^s_r a^k_i - a^s_i a^k_r]  
{\text{ (horizontal differentiation)} }\,, \label{a1}\\
\partial_t  a^k_i &=  \frac{\partial v^r}{\partial x^s}  J^{-1} 
[a^s_r a^k_i - a^s_i a^k_r]  
\ \ \text{ (time differentiation using $v=\eta_t$)} \,. \label{a2}
\end{align}

\subsection{The Piola identity}  It is a fact that the columns of every cofactor matrix
are divergence-free and satisfy
\begin{equation}\label{a3}
a^k_i, _k =0\,.
\end{equation}
The identity (\ref{a3}) will play a vital role in our energy estimates. (Note that we use
the notation cofactor for what is commonly termed the {\it adjugate matrix}, or the transpose
of the cofactor.)

\subsection{Geometric identities}
The vectors 
$\eta,_ \alpha $ for $ \alpha =1,2$ span the tangent plane to the surface $\Gamma $ in $\mathbb{R}  ^3$, and
$$
{\tau}_ 1 : =\frac{\eta,_ 1}{|\eta,_1 |} \,, \ \  {\tau}_ 2 : =\frac{\eta,_ 2}{|\eta,_2 |} \,, \ \ \text{ and }  n:= \frac{\eta,_1 \times \eta,_2}{ | \eta,_1 \times \eta,_2|}
$$
are the unit tangent and normal vectors, respectively, to $\Gamma$.    

Let ${g}_{ \alpha \beta }= \eta,_ \alpha  \cdot \eta,_ \beta $ denote the induced metric on the surface $\Gamma$; then
$\det g = | \eta,_1 \times \eta,_2|^2$  so that
$$
\sqrt{g}\, n:= \eta,_1 \times \eta,_2\,,
$$
where we will use the notation $\sqrt{g}$ to mean $\sqrt{ \det g}$.

By definition of the cofactor matrix, 
\begin{equation}\label{a3i}
{a}^3_i = \left[
\begin{array}{c}
\eta^2,_1 \eta^3,_2 - \eta^3,_1 \eta^2,_2 \\
\eta^3,_1 \eta^1,_2 - \eta^1,_1 \eta^3,_2 \\
\eta^1,_1 \eta^2,_2 - \eta^1,_2 \eta^2,_1 
\end{array}\right] \,, \text{ and } \sqrt{g} = | {a}^3_i| \,.
\end{equation}
It follows that
\begin{equation}\label{nisa3i}
n = {a}^3_i/ \sqrt{g}\,.
\end{equation} 

We will often make use of the following differentiation formulas for the unit normal and tangent vectors:
\begin{align*}
n,_ \alpha & = -g^{ \gamma \beta } (\eta,_ { \alpha \beta } \cdot n) \, \eta,_\gamma   \,, \\
n_t  &= -g^{ \gamma \beta } ({v},_ { \beta } \cdot n)\, \eta,_ \gamma    \,, 
\end{align*}
where $g^{ \gamma \beta }$ denote the inverse of the metric ${g}_{ \gamma \beta }$. Note that the right-hand sides
these identities are tangent vectors to the embedded surface.

\subsection{A polynomial-type inequality} \label{subsec_poly} For a constant $M_0\ge 0$,  suppose that $f(t)\ge 0$, 
$t \mapsto f(t)$ is continuous,  and
\begin{equation}\label{f}
f(t) \le M_0 + C\,t\, P(f(t))\,,
\end{equation}
where $P$ denotes a polynomial function,  and  $C$ is a generic constant.
Then for $t$ taken sufficiently small, we have the bound
$$
f(t) \le 2M_0\,.
$$
This type of inequality, which we introduced in  \cite{CoSh2006}, can be viewed
as  a generalization of standard nonlinear Gronwall inequalities.

With $E(t)$ defined by (\ref{formal}), we will show that $\sup_{t \in [0,T]} E(t)$ satisfies the inequality (\ref{f}).

\section{Trace estimates and the Hodge decomposition elliptic estimates}
The normal trace theorem which states that the existence of the
normal trace of a velocity field $w\in L^2(\Omega)$ relies on the
regularity of ${\operatorname{div}}  w$ (see, for example, \cite{Temam1984}). If ${\operatorname{div}}  w\in
H^1(\Omega)'$, then $w\cdot N$, the normal trace, exists in
$H^{-0.5}(\Gamma)$ so that
\begin{align}
\|w\cdot N\|^2_{H^{-0.5}(\Gamma)} \le C \Big[\|w\|^2_{L^2(\Omega)}
+ \|{\operatorname{div}}   w\|^2_{H^1(\Omega)'}\Big] \label{normaltrace}
\end{align}
for some constant $C$ independent of $w$. In addition to the normal
trace theorem, we have the following.
\begin{lemma}\label{tangential_trace}
Let $w\in L^2(\Omega)$ so that ${\operatorname{curl}}  w\in H^1(\Omega)'$, and let
$T_1$, $T_2$ denote the unit tangent vectors on $\Gamma$,
so that any vector field $u$ on $\Gamma$ can be uniquely written as $u^\alpha
T_\alpha$. Then
\begin{align}
\|w\cdot T_\alpha\|^2_{H^{-0.5}(\Gamma)} \le C
\Big[\|w\|^2_{L^2(\Omega)} + \| {\operatorname{curl}} 
w\|^2_{H^1(\Omega)'}\Big]\,,\qquad\alpha=1,2 \label{tangentialtrace}
\end{align}
for some constant $C$ independent of $w$.
\end{lemma}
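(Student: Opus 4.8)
The plan is to mimic the classical proof of the normal trace theorem stated just above in (\ref{normaltrace}), but with the roles of the divergence and the curl interchanged, by testing $w$ against a suitable vector field obtained from solving an elliptic problem on $\Omega$. First I would fix $\alpha \in \{1,2\}$ and an arbitrary $\phi \in H^{0.5}(\Gamma)$; the goal is to estimate the duality pairing $\langle w \cdot T_\alpha, \phi \rangle_\Gamma$ by $C |\phi|_{0.5}\,(\|w\|_0 + \|\operatorname{curl} w\|_{H^1(\Omega)'})$, since taking the supremum over such $\phi$ with $|\phi|_{0.5} \le 1$ then yields (\ref{tangentialtrace}). To this end, extend $\phi$ to a function $\bar\phi \in H^1(\Omega)$ with $\|\bar\phi\|_1 \le C|\phi|_{0.5}$, and on the reference cylinder $\Omega$ (where $N = (0,0,1)$, $T_1 = (1,0,0)$, $T_2 = (0,1,0)$) construct a vector field $\psi$ with $\psi = \bar\phi\, (N \times T_\alpha)$ on $\Gamma$ — that is, $\psi$ is $\phi$ times the remaining tangent direction on the boundary — and $\psi \cdot N = 0$ on all of $\partial\Omega$, with $\|\psi\|_1 \le C|\phi|_{0.5}$. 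The point of the cross product with $N$ is that when we integrate by parts, $T_\alpha$ will reappear as the boundary trace.

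The key identity is the integration-by-parts formula for the curl: for smooth vector fields,
\begin{equation}\nonumber
\int_\Omega \operatorname{curl} w \cdot \psi \, dx = \int_\Omega w \cdot \operatorname{curl}\psi \, dx + \int_\Gamma (w \times N) \cdot \psi \, dS \,,
\end{equation}
and the boundary integrand $(w \times N)\cdot \psi = (w\times N)\cdot \bar\phi (N\times T_\alpha)$ reduces, by the vector triple-product identity and $\psi \cdot N = 0$, to $\pm \bar\phi\, (w \cdot T_\alpha)$ on $\Gamma$ (modulo, possibly, a bottom-boundary contribution on $\{x_3=0\}$ where $\psi\cdot N = 0$ kills the $w\cdot N$ piece and the tangential piece can be handled identically or absorbed since $w$ is given on the whole closed domain). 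Hence
\begin{equation}\nonumber
\Big| \int_\Gamma \phi\, (w\cdot T_\alpha)\, dS \Big| \le \Big| \int_\Omega w \cdot \operatorname{curl}\psi\, dx \Big| + \Big| \int_\Omega \operatorname{curl} w \cdot \psi\, dx \Big| \le \|w\|_0 \|\operatorname{curl}\psi\|_0 + \|\operatorname{curl} w\|_{H^1(\Omega)'} \|\psi\|_1 \,,
\end{equation}
where the second term uses that $\psi \in H^1(\Omega)$ is an admissible test function for the $H^1(\Omega)'$ norm of $\operatorname{curl} w$. Since $\|\operatorname{curl}\psi\|_0 + \|\psi\|_1 \le C\|\psi\|_1 \le C|\phi|_{0.5}$, taking the supremum over $\phi$ with $|\phi|_{0.5}\le 1$ gives the claimed bound, and a density argument extends it from smooth $w$ to all $w \in L^2(\Omega)$ with $\operatorname{curl} w \in H^1(\Omega)'$.

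The main obstacle I anticipate is the construction of the test field $\psi$ with the three simultaneous requirements: the prescribed tangential boundary value $\bar\phi(N\times T_\alpha)$ on $\Gamma$, the condition $\psi \cdot N = 0$ on all of $\partial\Omega$ (needed so the triple product collapses cleanly and no spurious $w\cdot N$ term appears), and the $H^1$ bound $\|\psi\|_1 \le C|\phi|_{0.5}$. On the flat cylindrical geometry of Section~\ref{subsec_domain} this is elementary — one can simply take $\psi = (\mp \bar\phi, 0, 0)$ or $(0, \mp\bar\phi, 0)$ after choosing $\bar\phi$ to be a standard $H^1$ extension of $\phi$ that is, say, independent of $x_3$ near the boundaries or cut off appropriately so that $\psi$ has no normal component anywhere on $\partial\Omega$ — so the difficulty is really just bookkeeping rather than substance. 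A secondary point requiring a line of care is justifying the integration-by-parts identity and the duality pairing $\langle \operatorname{curl} w, \psi\rangle$ at the stated regularity ($w \in L^2$, $\operatorname{curl} w \in H^1(\Omega)'$), which is handled by first proving everything for $w \in C^\infty(\bar\Omega)$ and then passing to the limit using that such $w$ are dense and that all terms in the final inequality are continuous in the relevant norms.
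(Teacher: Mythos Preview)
The paper does not actually prove this lemma; it simply refers the reader to \cite{ChCoSh2007}. Your duality argument via the curl integration-by-parts identity is the standard proof of such tangential trace estimates and is correct in outline; it is almost certainly what appears in the cited reference.

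One minor point deserves tightening: your handling of the bottom boundary $\{x_3=0\}$ is a little vague. With $\psi = \bar\phi\,(N\times T_\alpha)$ extended as a constant-direction field, the boundary integral on $\{x_3=0\}$ is (up to sign) $\int_{\{x_3=0\}} \bar\phi\,(w\cdot T_\alpha)\,dS$, which is another tangential trace of $w$ on the wrong boundary; it cannot simply be ``absorbed'' or ``handled identically'' without circularity. The clean fix is to choose the $H^1$ extension $\bar\phi$ so that $\bar\phi=0$ on $\{x_3=0\}$ --- for instance, take any extension and multiply by a smooth cutoff $\chi(x_3)$ that equals $1$ near $x_3=1$ and $0$ near $x_3=0$. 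This kills the bottom contribution outright while preserving the trace $\bar\phi|_\Gamma=\phi$ and the bound $\|\bar\phi\|_1\le C|\phi|_{0.5}$.
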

See \cite{ChCoSh2007} for the proof.
Combining (\ref{normaltrace}) and (\ref{tangentialtrace}), 
\begin{align}
\|w\|_{H^{-0.5}(\Gamma)} \le C\Big[\|w\|_{L^2(\Omega)} + \|{\operatorname{div}} 
w\|_{H^1(\Omega)'} + \|{\operatorname{curl}}  w\|_{H^1(\Omega)'}\Big]
\label{tracetemp}
\end{align}
for some constant $C$ independent of $w$.

The construction of our higher-order energy function is based on the following Hodge-type elliptic estimate:
\begin{proposition}\label{prop1}
For an $H^r$ domain $\Omega$, $r \ge 3$,
if $F \in L^2(\Omega;{\mathbb R} ^3)$ with $\operatorname{curl}F \in H^{s-1}(\Omega;{\mathbb R} ^3)$,
${\operatorname{div}}F\in H^{s-1}(\Omega)$, and $F \cdot N|_{\Gamma} \in
H^{s -{\frac{1}{2}}}(\Gamma)$ for $1 \le s \le r$, then there exists a
constant $\bar C>0$ depending only on $\Omega$ such that
\begin{equation}
\begin{array}{c}
\|F\|_s \le \bar C\left( \|F\|_0 + \|\operatorname{curl} F\|_{s-1}
+ \|\operatorname{div} F\|_{s-1} + |\bar \p F \cdot N|_{s-{\frac{3}{2}}}\right)\,, \\
\|F\|_s \le \bar C\left( \|F\|_0 + \|\operatorname{curl} F\|_{s-1}
+ \|\operatorname{div} F\|_{s-1} + |\bar \p F \cdot T_ \alpha |_{s-{\frac{3}{2}}}\right)\,,
\end{array}
\label{hodge}
\end{equation}
where $N$ denotes the outward unit-normal to $\Gamma$, and $T_ \alpha$ are tangent vectors for $ \alpha =1,2$.
\end{proposition}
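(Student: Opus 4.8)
The plan is to prove the Hodge-type estimate \eqref{hodge} by reducing the full $H^s$ control of a vector field $F$ to the control of its curl, divergence, and the \emph{tangential derivative} of its normal (or tangential) component on $\Gamma$, starting from the classical elliptic estimate for the div--curl system. First I would recall the standard result: for $F\in L^2(\Omega;\mathbb{R}^3)$ with $\operatorname{curl}F\in H^{s-1}$ and $\operatorname{div}F\in H^{s-1}$, one has $\|F\|_s\le \bar C(\|F\|_0+\|\operatorname{curl}F\|_{s-1}+\|\operatorname{div}F\|_{s-1}+|F\cdot N|_{s-1/2})$. This is the version with the \emph{full} $H^{s-1/2}(\Gamma)$ norm of the normal component; it follows from writing $F=\nabla\phi+\operatorname{curl}\psi$ (Hodge decomposition) and applying standard elliptic regularity for the Neumann problem $\Delta\phi=\operatorname{div}F$, $\partial\phi/\partial N$ prescribed in terms of $F\cdot N$, together with elliptic regularity for $\psi$. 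Since $\Omega$ has only flat top and bottom boundary components (and is periodic in $x_1,x_2$), one can also argue directly by differentiating in horizontal directions $\bar\partial$, which commute with the boundary, and treating the single normal direction $x_3$ via the equations.

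The crux is to replace the $|F\cdot N|_{s-1/2}$ term by $|\bar\partial F\cdot N|_{s-3/2}$, i.e.\ to gain back a half-derivative by exploiting that on $\Gamma=\{x_3=1\}$ the only nontrivial tangential directions are $x_1,x_2$ and that $\bar\partial$ commutes with restriction to $\Gamma$. The key step: on $\Gamma$, $N=(0,0,1)$ is constant, so $F\cdot N=F^3|_\Gamma$, and $|\bar\partial(F\cdot N)|_{s-3/2}=|\bar\partial F^3|_{s-3/2}$ differs from $|F^3|_{s-1/2}$ only by the zeroth-order piece $|F^3|_{L^2(\Gamma)}$, which by the trace theorem is bounded by $\|F\|_1\le\|F\|_s$ in a way that can be absorbed, \emph{provided $s>1$}; for the endpoint-type lower values of $s$ one instead uses the trace inequality $|F\cdot N|_{s-1/2}\le C(\|F\|_{s}^{1/2}$-type interpolation$)$ combined with Young's inequality to absorb $\|F\|_s$ into the left side with a small constant. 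So the statement really is: $\|F\|_s\le\bar C(\|F\|_0+\|\operatorname{curl}F\|_{s-1}+\|\operatorname{div}F\|_{s-1}+|F\cdot N|_{s-1/2})$ and then $|F\cdot N|_{s-1/2}\lesssim |\bar\partial F\cdot N|_{s-3/2}+|F\cdot N|_{L^2(\Gamma)}$, with the last term handled by a trace estimate plus interpolation plus Young.

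For the second inequality in \eqref{hodge}, the same reasoning applies with $N$ replaced by a tangent vector $T_\alpha=(\delta_{\alpha 1},\delta_{\alpha 2},0)$ (again constant on $\Gamma$), so $F\cdot T_\alpha=F^\alpha|_\Gamma$; the decomposition of a boundary vector field into its components along $T_1,T_2$ and $N$ lets one recover $|F|_{s-1/2}$ on $\Gamma$ from $|F\cdot N|_{s-1/2}$ and $|F\cdot T_\alpha|_{s-1/2}$, and Lemma~\ref{tangential_trace} (together with \eqref{normaltrace} and \eqref{tracetemp}) gives the negative-order trace bounds needed to close the low-regularity endpoint. Concretely I would: (i) state and invoke the classical div--curl estimate with the full $H^{s-1/2}(\Gamma)$ boundary term; (ii) write $|F\cdot N|_{s-1/2}\le C(|\bar\partial(F\cdot N)|_{s-3/2}+|F\cdot N|_0)$ using that $\bar\partial$ spans the tangential differentiations on the flat $\Gamma$; (iii) bound $|F\cdot N|_0\le C\|F\|_{0.5+\epsilon}$ by the trace theorem, then interpolate $\|F\|_{0.5+\epsilon}\le\epsilon'\|F\|_s+C_{\epsilon'}\|F\|_0$ and absorb; (iv) repeat verbatim with $T_\alpha$ in place of $N$, using Lemma~\ref{tangential_trace}.

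The main obstacle I anticipate is the half-derivative bookkeeping at the boundary: showing cleanly that the full fractional norm $|F\cdot N|_{s-1/2}(\Gamma)$ is controlled by the horizontal-derivative norm $|\bar\partial F\cdot N|_{s-3/2}(\Gamma)$ plus a harmless lower-order term, and then arranging the interpolation-plus-absorption so that no $\|F\|_s$ remains on the right. On a flat, periodic boundary this is essentially the statement $\|g\|_{H^{t}(\mathbb{T}^2)}\le C(\|\bar\partial g\|_{H^{t-1}(\mathbb{T}^2)}+\|g\|_{H^{t-1}(\mathbb{T}^2)})$ for $t=s-1/2$, which is immediate by Fourier multipliers; the only care needed is that $t-1=s-3/2$ must match the regularity at which $\bar\partial F\cdot N$ is actually available in the applications, and that the constant depends only on $\Omega$. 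Everything else is a routine assembly of Hodge/elliptic theory and trace theorems already quoted in the excerpt.
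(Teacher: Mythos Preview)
Your proposal is correct and considerably more detailed than the paper's own treatment, which does not actually give a proof: it simply records that the first inequality ``is well-known and follows from the identity $-\Delta F = \operatorname{curl}\operatorname{curl} F - D\operatorname{div} F$'' (citing Taylor), and that ``the second estimate follows from the first using the same geometric identities on the boundary.'' Your route---start from the classical div--curl estimate with the full $|F\cdot N|_{s-1/2}$ boundary term, then on the flat periodic $\Gamma$ reduce this to $|\bar\partial(F\cdot N)|_{s-3/2}$ plus a low-order piece, then absorb---is a valid way to fill in the details; the paper's hint via the vector Laplacian identity reaches the same conclusion through elliptic regularity for $\Delta F$ with the appropriate boundary data. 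One small simplification you could make: in your step~(iii) the interpolation-and-absorption can be avoided entirely by invoking the normal trace estimate \eqref{normaltrace} (respectively Lemma~\ref{tangential_trace}) to bound $|F\cdot N|_{-1/2}$ (respectively $|F\cdot T_\alpha|_{-1/2}$) directly by $\|F\|_0+\|\operatorname{div}F\|_{H^1(\Omega)'}$ (respectively $\|F\|_0+\|\operatorname{curl}F\|_{H^1(\Omega)'}$); since on $\mathbb{T}^2$ one has $|g|_{s-1/2}\le C\bigl(|\bar\partial g|_{s-3/2}+|g|_{-1/2}\bigr)$ by elementary Fourier analysis, this closes the estimate without ever placing a small multiple of $\|F\|_s$ on the right-hand side.
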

The first estimate is well-known and follows from the identity $-\Delta F= {\operatorname{curl}}\, 
{\operatorname{curl}}F - D {\operatorname{div}}F$; a convenient reference is Taylor
\cite{Taylor1996}. The second estimate follows from the first using the same geometric identities on the boundary.

\section{The a priori estimates}\label{sec_usersguide}

Since the degeneracy of the initial density is only in  the normal (or vertical) direction to the vacuum boundary, 
and hence there is a constant $C>0$ such that $| \bar \p \rho_0(x)| \le C \rho_0(x)$,
 we {\it may assume without loss of generality} that
 $\rho_0 = \rho_0(x_3) \text{ and } \rho_0,_3(x_3) =  1$ for $x_3$ very small.   In fact, it
is convenient to suppose that
$$
\rho_0(x_3) = 1-x_3 \,,
$$
although any sufficiently smooth function which vanishes on $\Gamma$ and is bounded from below by a constant multiple of
the distance function near $\Gamma$ would suffice.

\subsection{Curl Estimates}\label{subsec_curlestimates}

Following Lemma 10.1 in \cite{CoSh2007}, we obtain the following estimates.

\begin{proposition} \label{curl_est}
For all $t \in (0,T)$,
\begin{align}
&\sum_{a=0}^3 \|{\operatorname{curl}} \, \p_t^{2a}\eta(t)\|_{3-a}^2 +
\sum_{l=0}^4 \|\rho_0 \, \bar \p^{4-l}  {\operatorname{curl}}\, \p_t^{2l}\eta(t)\|_{0}^2 
\le M_0 + C\, T\, P({\sup_{t\in[0,T]}} E(t))\,.
\label{curl_estimate}
\end{align}
\end{proposition}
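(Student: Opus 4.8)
The plan is to start from the Lagrangian vorticity equation $\operatorname{curl}_\eta v_t = 0$, i.e. $\varepsilon_{kji} A^s_j v^i_t,_s = 0$, and convert it into an evolution equation for the ordinary (flat) curl of $v$ and, after time-integration, for $\operatorname{curl}\eta$. Write $\varepsilon_{kji} v^i_t,_j = \varepsilon_{kji}(\delta^s_j - A^s_j) v^i_t,_s$, so that
\begin{equation}
\p_t \operatorname{curl} v = \varepsilon_{kji}(\delta^s_j - A^s_j) v^i_t,_s \nonumber
\end{equation}
and the right-hand side is quadratic: it carries a factor $\delta^s_j - A^s_j$, which vanishes at $t=0$ and which, by the fundamental theorem of calculus and (\ref{a2}), equals $-\int_0^t \p_t A^s_j = \int_0^t A \cdot Dv \cdot A$-type terms, hence is $O(t)$ in the relevant norms with constant controlled by $\sup_{[0,T]} E(t)$. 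The same structure persists after applying $\p_t^{2a}$ for $a = 1,2,3$ and $\bar\p^{4-l}\p_t^{2l}$ for $l = 0,\dots,4$: one commutes these operators through the equation, lands a genuine curl (or $\rho_0$-weighted curl) on the highest-order term, and collects a sum of products in which every summand retains either a factor $\delta^s_j - A^s_j$ or a factor $\p_t A$ or $\bar\p A$, each of which is $O(t)$ in $L^\infty_t$ of the appropriate Sobolev norm. This is exactly the mechanism behind Lemma 10.1 of \cite{CoSh2007}, which the proposition cites.

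Concretely, I would proceed as follows. First, fix $a \in \{0,1,2,3\}$, apply $\p_t^{2a}$ to $\varepsilon_{kji} A^s_j v^i_t,_s = 0$, and move all terms in which a time derivative hits $A^s_j$ to the right: the left side becomes $\varepsilon_{kji} A^s_j \p_t^{2a} v^i_t,_s = \varepsilon_{kji}\p_t^{2a}v^i_t,_j + \varepsilon_{kji}(A^s_j - \delta^s_j)\p_t^{2a}v^i_t,_s$, i.e. $\p_t(\operatorname{curl}\,\p_t^{2a}\eta)$ up to a term that is $O(t)$ times top-order. Next, estimate in $H^{3-a}(\Omega)$: the highest-derivative contributions are of the form $(A - \operatorname{Id})\cdot D\p_t^{2a}v$ and $\p_t A \cdot D\p_t^{2a-1}v$ (and lower), and using the multiplicative Sobolev and interpolation inequalities together with the definition (\ref{formal}) of $E$, each is bounded by $C\,P(\sup_{[0,T]}E)$ in $L^2_t$—actually with an explicit factor of $t$ coming from the identity $A(t) - \operatorname{Id} = \int_0^t \p_t A$ and from $\p_t J^{-1}$, $\p_t A$ themselves being integrals in time of $E$-controlled quantities. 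Integrating $\p_t(\operatorname{curl}\,\p_t^{2a}\eta)$ from $0$ to $t$ and using $\|\operatorname{curl}\,\p_t^{2a}\eta(0)\|_{3-a}^2 \le M_0$ gives the first sum in (\ref{curl_estimate}). Second, for the weighted terms, fix $l$, apply $\rho_0\,\bar\p^{4-l}\p_t^{2l}$ (noting $\rho_0 = 1 - x_3$ commutes trivially with $\bar\p$ and $\p_t$) to the vorticity equation, again peel off the highest term as $\rho_0\,\bar\p^{4-l}\p_t^{2l}\operatorname{curl} v_t$ plus commutators, and estimate in $L^2(\Omega)$; here the $\rho_0$ weight is exactly what is needed to absorb the borderline $\bar\p^{4-l}$ derivative landing on $\eta$ or on $A$, consistently with how $E$ weights its top-order $\bar\p^4$ terms, and the $O(t)$ structure is the same. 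Time-integration then yields the second sum.

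The main obstacle is bookkeeping the commutator terms so that each one genuinely carries a smallness factor $t$ (or is already lower-order with a controllable $P(\sup E)$ coefficient), rather than merely being bounded—this is what makes the right-hand side of the form $M_0 + C\,T\,P(\sup_{[0,T]}E)$ as required for the later application of the polynomial inequality of Section \ref{subsec_poly}. The delicate points are: (i) when $\bar\p^{4-l}$ with $l$ small acts on the weighted term, one must check that no more than the allotted number of derivatives falls on any single factor so that the $\rho_0$-weight and the available $H^s$ norms in $E$ suffice, using the weighted embedding (\ref{w-embed}) if a half-derivative must be traded; (ii) the terms with the maximal number of time derivatives require using $\sqrt{\rho_0}\,\bar\p^{4-a}\p_t^{2a}v \in L^2$ and $\rho_0\,\bar\p^{4-a}\p_t^{2a}D\eta \in L^2$ from the definition of $E$, and one checks the pairing of weights works out. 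Once the commutator terms are organized, the estimates are a routine application of the product and interpolation inequalities, and the result follows exactly as in \cite{CoSh2007}.
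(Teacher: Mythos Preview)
Your overall strategy --- deriving an evolution equation for the flat curl from the Lagrangian vorticity identity and integrating in time --- is the paper's approach. But there is a genuine gap in your bookkeeping at the endpoint $a=0$ (and correspondingly the weighted case $l=0$). You claim that after commuting and integrating, the highest-order source term is of the form $(A-\operatorname{Id})\cdot D\p_t^{2a}v$, and that this is bounded in $H^{3-a}$ by $C\,P(\sup E)$. For $a=0$ this would require control of $\|Dv\|_3$, i.e.\ $v\in H^4(\Omega)$, which is \emph{not} contained in the energy $E(t)$: the first sum in (\ref{formal}) controls only even time derivatives of $\eta$, so $\|\eta\|_4$ is available but $\|v\|_4$ is not, and the interpolation (\ref{interp1}) only yields $v\in L^2(0,T;H^{3.5})$. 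The $O(t)$ smallness of $A-\operatorname{Id}$ does not rescue this, since the product estimate in $H^3$ still demands $\|Dv\|_3$ on one of the factors.

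The missing step is an integration by parts \emph{in time} inside the time integral, which converts the top-order factor $D^4v$ (or $D^4v_t$) into $D^4\eta$. This is precisely the device in the paper's proof: after writing $\operatorname{curl} D\eta(t)$ as in (\ref{ssscurl1}) and applying $D^2$, the worst term scales like $\int_0^t\!\int_0^{t'} D^3v\,Dv\,A\,A\,dt''dt'$, and the paper explicitly integrates by parts in $t''$ to replace $D^3v$ by $D^3\eta$, which \emph{is} in $E$. The same manoeuvre is carried out for the weighted estimate (\ref{sscurl13_2d}). Your proposal handles $a\ge 1$ correctly --- indeed the paper also estimates $\operatorname{curl} v_t = (\operatorname{Id}-A)\,Dv_t$ directly in $H^2$, since $\|v_t\|_3$ is in $E$ --- but without the temporal integration by parts the $a=0$ case does not close.
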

\begin{proof} From (\ref{vorticity}), $( \operatorname{curl} _\eta v)^k_t = \varepsilon_{kji}{ A_t}^s_j v^i,_s = : B(A,Dv)$, where
$B$ is quadratic in its arguments; hence,
\begin{equation}\label{curlv_3d}
\operatorname{curl} _\eta v(t) = \operatorname{curl} u_0 + \int_0^t B(A(t'),Dv(t')) dt'\,,
\end{equation} 
and computing the gradient of this relation yields
$$
\operatorname{curl} _\eta D v(t) = \operatorname{curl} Du_0 - \varepsilon_{ \cdot ji}DA^s_j v^i,_s + \int_0^t DB(A(t'),Dv(t')) dt'\,.
$$
Applying the fundamental theorem of calculus once again, shows that
$$
\operatorname{curl} _ \eta D \eta(t) = t \operatorname{curl} Du_0 +  \varepsilon_{ \cdot ji}\int_0^t [{A_t}^s_j D\eta^i,_s -DA^s_j v^i,_s] dt'
+ \int_0^t  \int_0^{t'} D B(A(t''),Dv(t'')) dt'' dt' \,,
$$
and finally that
\begin{align} 
\operatorname{curl} D \eta(t) &= t \operatorname{curl}D u_0 - \varepsilon_{ \cdot ji}\int_0^t {A_t}^s_j(t') dt' \,  D\eta^i,_s \nonumber  \\
& +  \varepsilon_{ \cdot ji}\int_0^t [{A_t}^s_j D\eta^i,_s -DA^s_j v^i,_s] dt'
+ \int_0^t  \int_0^{t'} DB(A(t''),Dv(t'')) dt'' dt'  \,. \label{ssscurl1}
\end{align} 

To obtain an estimate for $\| \operatorname{curl} \eta(t)\|^2_3$, we let $D^2$ act on (\ref{ssscurl1}).   
With $ \p_t A^s_j = - A^s_l v^l,_p A^p_j$ and $ D A^s_j = - A^s_l D\eta^l,_p A^p_j$, we see that the first three terms on the
right-hand side of (\ref{ssscurl1}) are bounded by $M_0 + C\, T\, P( \sup_{t \in [0,T]} E(t))$, where we remind the reader that
$M_0 = P(E(0))$ is a polynomial function of the $E$ at time $t=0$.   Since
$$
DB(A, Dv) = - \varepsilon_{kji} [Dv^i,_s A^s_l v^l,_p A^p_j + v^i,_s A^s_l Dv^l,_p A^p_j+ v^i,_sv^l,_pD( A^s_l  A^p_j)],
$$
the highest-order term arising from the action of $D^2$ on $DB(A,Dv)$ is written as
$$
-\varepsilon_{kji}\int_0^t \int_0^{t'} [D^3v^i,_s A^s_l v^l,_p A^p_j + v^i,_s A^s_l D^3v^l,_p A^p_j] dt''dt'  \,.
$$
Both summands in the integrand scale like $D^3v\, Dv\, A\, A$.  The precise structure of this summand is not very important; rather,
the derivative count is the focus.  
Integrating by parts in time, 
\begin{align*} 
\int_0^t\int_0^{t'} D^3v\, Dv\, A\, A \,dt'' dt' = - \int_0^t\int_0^{t'} D^3\eta\, (Dv\, A\, A )_t dt'' dt'  + \int_0^t D^3\eta\, Dv\, A\, A\, dt'
\end{align*} 
from which it follows that
$$
\bigl\|  \int_0^t  \int_0^{t'} D^3 B(A(t''),Dv(t'')) dt'' dt'\bigr\|^2_0 \le C\, T\, P( \sup_{t \in [0,T]} E(t)) \,,
$$
and hence
$$
\sup_{t \in [0,T]} \| \operatorname{curl} \eta(t)\|^2_3 \le M_0  + C\, T\, P( \sup_{t \in [0,T]} E(t)) \,.
$$

Next, we show that 
\begin{equation} \label{sscurl2_2d}
\| \operatorname{curl} v_t(t) \|^2_2 \le M_0 + C\, T\, P({\sup_{t\in[0,T]}} E(t)) \,.
\end{equation} 
From (\ref{vorticity}),
\begin{align*} 
\operatorname{curl} v_t  &=   \varepsilon_{j\cdot i} \int_0^t {A_t}^s_j (t') dt' \, v_t^i,_s\,.
\end{align*} 
Since $H^2(\Omega)$ is a multiplicative algebra, we can directly estimate the $H^2(\Omega)$-norm of $ \operatorname{curl} v_t$ to prove
that (\ref{sscurl2_2d}) holds.   The estimates for $ \operatorname{curl} v_{ttt}(t)$ in $H^1(\Omega)$ and $ \operatorname{curl} \p_t^5v(t)$
in $L^2(\Omega)$ follow the same argument.

The weighted estimates follow from similar reasoning.  We first show that
\begin{equation}\label{sscurl13_2d}
\| \rho_0 \bar \p^4 \operatorname{curl} \eta(t) \|^2_0 \le M_0 + C\,T\, P( \sup_{t \in [0,T]} E(t)) \,.
\end{equation}
To prove this weighted estimate, we write (\ref{curlv_3d}) as
\begin{equation}\nonumber
\operatorname{curl} v(t) =  \varepsilon_{jki} v^i,_s \int_0^t {A_t}^s_j(t')dt' + \operatorname{curl} u_0 + \int_0^t B(A(t'),Dv(t')) dt'\,,
\end{equation} 
and integrate in time to find that
\begin{equation}\nonumber
\operatorname{curl} \eta(t) = t \operatorname{curl} u_0+ \int_0^t \varepsilon_{jki} v^i,_s \int_0^{t'} {A_t}^s_j(t'')dt'' dt'  + \int_0^t\int_0^{t'} B(A(t''),Dv(t'')) dt''dt'\,.
\end{equation} 

It follows that
\begin{align} 
&\rho_0 \bar \p^4 \operatorname{curl} \eta(t) =  t\rho_0 \bar \p^4 \operatorname{curl} u_0  \nonumber  \\
&\
+ \int_0^t\int_0^{t'} \varepsilon_{kji}{A_t}^s_j \rho_0 \bar \p^4 v^i,_s dt''dt'
+ \int_0^t\int_0^{t'} \varepsilon_{kji} \rho_0 \bar \p^4{A_t}^s_j  v^i,_s dt''dt'
\nonumber \\
& \ 
+ \int_0^t \varepsilon_{jki} \rho_0 \bar \p^4 v^i,_s \int_0^{t'} {A_t}^s_j(t'')dt'' dt' 
+ \int_0^t \varepsilon_{jki}  v^i,_s \int_0^{t'} \rho_0 \bar \p^4{A_t}^s_j(t'')dt'' dt' 
+ \mathfrak{R}_2  \,, \label{sscurl8_2d}
\end{align} 
where $ \mathfrak{R}_2 $ denotes terms which are  lower-order  in the derivative count; in particular  the terms with the highest
derivative count in $ \mathfrak{R}_2$ scale like $\rho \bar \p^3 Dv$ or $\rho \bar\p^4\eta$,
and hence satisfy  the inequality $ \| \mathfrak{R}_2(t)\|^2_0 \le M_0 + C\, T\, P( \sup_{t \in [0,T]} E(t))$.  We focus on 
the first integral on the right-hand side of (\ref{sscurl8_2d}); integrating by parts in time, we find that
$$
 \int_0^t\int_0^{t'} \varepsilon_{kji}{A_t}^s_j \rho_0 \bar \p^4 v^i,_s dt''dt'
= 
- \int_0^t\int_0^{t'} \varepsilon_{kji}{A_{tt}}^s_j \rho_0 \bar \p^4 \eta^i,_s dt''dt'
+ \int_0^t  \varepsilon_{kji}{A_{t}}^s_j \rho_0 \bar \p^4 \eta^i,_s   dt'  
$$
and hence
$$
\left\|   \int_0^t\int_0^{t'} \varepsilon_{kji}{A_t}^s_j \rho_0 \bar \p^4 v^i,_s dt''dt'  \right\|^2_0 \le M_0+ C\,T\, P( \sup_{t \in [0,T]} E(t)) \,.
$$
The other time integrals in (\ref{sscurl8_2d}) can be estimated in the same fashion, which proves that (\ref{sscurl13_2d}) holds.
The weighted estimates for the curl of $v_t$, $v_{ttt}$ and $\p_t^5v$ are obtained similarly.
\end{proof}

\subsection{Energy estimates}

We assume that we have smooth solutions $\eta$
on a time interval $[0,T ]$, and that for all such solutions, the time $T>0$ is taken sufficiently small so that for
$t \in [0,T ]$ 
\begin{equation}\label{assumptions_2d}
\begin{array}{l}
 {\frac{1}{2}} \le J(t) \le {\frac{3}{2}}\,, \\
\|  \eta(t)\|^2_ {3.5}   \le 2 | \Omega^+|^2 + 1 \,,   \\
 \| \p_t^a v(t)\|^2_ {3-a/2}  \le 2 \|\p_t^a v(0)\|^2_{3-a/2} + 1\, \ \  \  \text{ for } \ a=0,1,...,6 \,.
\end{array}
\end{equation}  
  The right-hand sides appearing in these inequalities shall be denoted by a
generic constant $C$ in the estimates appearing below.  Once we establish our a priori bounds, can indeed verify
that our solution adhere to the assumptions (\ref{assumptions_2d}) by means of the fundamental theorem of 
calculus.

\subsubsection{The structure of the estimates}   Due to the degeneracy of the initial density function $\rho_0$, one time derivative
scales like one-half of a space derivative.   The energy estimates for the time and tangential derivatives are obtained by first
studying the $\bar \p^4$-differentiated Euler equations, then the $\bar \p^3 \p_t^2$-differentiated Euler equations, and so on, until
we reach the $\bar\p^0 \p_t^8$-differentiated Euler equations.   The estimates for the normal derivatives are then found using
elliptic-type estimates.   The Sobolev embedding theorem requires that we use $H^4(\Omega)$ as the minimal regularity of
$\eta(t)$.

\subsubsection{The $\bar \p^4$-problem}
\begin{proposition} \label{prop1_2d} For $ \delta >0$ and letting the constant $M_0$ depend on $1/ \delta $, 
\begin{align}
&\sup_{t \in [0,T]} \left( \int_\Omega \rho_0(x) |\bar \p^4 v(x,t)|^2 dx + \int_\Omega \rho_0^2(x) | \bar \p^4 D\eta(x,t)|^2 dx\right) \nonumber \\
& \qquad\qquad\qquad\qquad\qquad\qquad\qquad
 \le M_0
+ \delta \sup_{t \in [0,T]} E(t) + C\, T\, P( \sup_{t \in [0,T]} E(t)) \,. \label{energy1_2d}
\end{align} 
\end{proposition}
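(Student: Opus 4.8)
\emph{Overview and Step 1 (the energy identity).} The plan is to run a weighted energy estimate on the $\bar\p^4$--differentiated momentum equation, testing against $\bar\p^4 v$, and to read off a coercive bound for the two quantities in \eqref{energy1_2d} from the resulting time--differentiated identity; Proposition~\ref{curl_est} will be needed to absorb a curl contribution that appears with the wrong sign. Using the Piola identity \eqref{a3}, I would first rewrite \eqref{ce0.a} as $\rho_0 v^i_t + (a^k_i\,\rho_0^2 J^{-2}),_k = 0$, apply $\bar\p^4$, multiply by $\bar\p^4 v^i$ and integrate over $\Omega$. Since $\rho_0=\rho_0(x_3)=1-x_3$ depends only on $x_3$, the first term is $\tfrac12\frac{d}{dt}\|\sqrt{\rho_0}\,\bar\p^4 v\|_0^2$. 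Integrating the pressure term by parts in $x_k$ produces no boundary contribution: on $\Gamma=\{x_3=1\}$ every horizontal derivative of $\rho_0^2 J^{-2}$ still carries the factor $\rho_0^2$ and hence vanishes, while on $\{x_3=0\}$ one has $\eta^3,_1=\eta^3,_2=0$, so $a^3_1=a^3_2=0$ by \eqref{a3i}, and $\bar\p^4 v^3=0$. There remains
\[
\tfrac12\tfrac{d}{dt}\|\sqrt{\rho_0}\,\bar\p^4 v\|_0^2 \;=\; \int_\Omega \bar\p^4\big(a^k_i\,\rho_0^2 J^{-2}\big)\,\bar\p^4 v^i,_k\,dx \;=\; \int_\Omega \rho_0^2\,\bar\p^4\big(a^k_i J^{-2}\big)\,\p_t\big(\bar\p^4\eta^i,_k\big)\,dx ,
\]
using $\bar\p\rho_0=0$ and $v=\eta_t$.

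\emph{Step 2 (extracting the energy).} Iterating the differentiation formulas \eqref{J1} and \eqref{a1}, the top--order part of $\bar\p^4(a^k_i J^{-2})$ is $-J^{-3}\big(a^s_r a^k_i + a^s_i a^k_r\big)\,\bar\p^4\eta^r,_s$, the remainder being terms in which the most--differentiated factor carries at most three horizontal derivatives of $a$ or of $D\eta$. The crucial algebraic point is that the tensor $T^{rs}_{ik}:=a^s_r a^k_i + a^s_i a^k_r$ is symmetric under $(r,s)\leftrightarrow(i,k)$; pairing it with $\p_t(\bar\p^4\eta^i,_k)$ and symmetrizing therefore converts the top--order contribution into $-\tfrac12\frac{d}{dt}\int_\Omega \rho_0^2 J^{-3}T^{rs}_{ik}\,(\bar\p^4\eta^r,_s)(\bar\p^4\eta^i,_k)\,dx$ plus a term $\tfrac12\int_\Omega \rho_0^2\,\p_t\!\big(J^{-3}T^{rs}_{ik}\big)(\bar\p^4\eta^r,_s)(\bar\p^4\eta^i,_k)$ that is $\le C\|\rho_0\bar\p^4 D\eta\|_0^2$. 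Hence $\frac{d}{dt}\mathcal E(t)=\{\mathrm{error}\}$, where
\[
\mathcal E(t):=\tfrac12\|\sqrt{\rho_0}\,\bar\p^4 v(t)\|_0^2 \;+\; \tfrac12\int_\Omega \rho_0^2 J^{-3}\,\big(a^s_r a^k_i + a^s_i a^k_r\big)(\bar\p^4\eta^r,_s)(\bar\p^4\eta^i,_k)\,dx .
\]

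\emph{Step 3 (coercivity, integration, and use of the curl estimate).} Writing $W^r_s=\bar\p^4\eta^r,_s$, the quadratic form in $\mathcal E$ equals $(a^s_r W^r_s)^2 + (a^s_i W^r_s)(a^k_r W^i_k)$. Since $\|a-I\|_{L^\infty}\le C\,T$ by \eqref{assumptions_2d}, and since $\sum_{r,s}(W^r_s)^2 - \sum_{r,s}W^r_s W^s_r = |\operatorname{curl}\bar\p^4\eta|^2$ pointwise, this form is $\ge (1-C T)|\bar\p^4 D\eta|^2 - |\bar\p^4\operatorname{curl}\eta|^2$; with $\tfrac12\le J\le\tfrac32$ this gives, for $T$ small, $\mathcal E(t)+C\|\rho_0\bar\p^4\operatorname{curl}\eta(t)\|_0^2\ \ge\ c\big(\|\sqrt{\rho_0}\,\bar\p^4 v(t)\|_0^2 + \|\rho_0\bar\p^4 D\eta(t)\|_0^2\big)$ for constants $c,C>0$. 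Because $\eta(0)=e$ gives $\bar\p^4(D\eta)(0)=0$, one has $\mathcal E(0)=\tfrac12\|\sqrt{\rho_0}\,\bar\p^4 u_0\|_0^2\le M_0$; integrating $\frac{d}{dt}\mathcal E=\{\mathrm{error}\}$ in time and bounding $\|\rho_0\bar\p^4\operatorname{curl}\eta\|_0^2\le M_0+C\,T\,P(\sup_{[0,T]}E)$ by Proposition~\ref{curl_est} then yields \eqref{energy1_2d}, provided the error integral is shown to be $\le M_0 + \delta\sup_{[0,T]}E + C\,T\,P(\sup_{[0,T]}E)$.

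\emph{The main obstacle.} Everything above is essentially bookkeeping; the real work is estimating $\int_0^t\{\mathrm{error}\}$. It is a sum of three kinds of terms. First, terms where $\p_t$ or some $\bar\p$'s fall on the $L^\infty$--bounded coefficients $J,a$; after the time integration these are $\le C\,T\,P(\sup E)$. Second, Leibniz remainders from $\bar\p^4(a^k_i J^{-2})$ whose most--differentiated factor carries at most four derivatives and so is bounded in $L^2$ by the interior Sobolev norms of $\eta$ controlled within the overall hierarchy of estimates. Third, and this is the delicate case, the borderline terms, in which a factor of the order of $\bar\p^4 D v=\p_t\bar\p^4 D\eta$ must be traded, by integrating by parts once more in time ($\int_0^t(\cdots)\,\p_t\bar\p^4\eta = \big[\int_\Omega(\cdots)\bar\p^4\eta\big]_0^t-\int_0^t\int_\Omega \p_t(\cdots)\,\bar\p^4\eta$), into a factor of order $\bar\p^4 D\eta$ that is controlled by $E$ only with the full weight $\rho_0$, while it is paired against $\bar\p^4 v$, which $E$ controls only with the weaker weight $\sqrt{\rho_0}$; the boundary--in--time term is then estimated by $C\|\rho_0\bar\p^4 D\eta(t)\|_0\,\|\bar\p^4\eta(t)\|_0 \le \delta\sup_{[0,T]}E + C_\delta$, and this is precisely the mechanism that produces the $\delta\sup E$ term and forces $M_0$ to depend on $1/\delta$. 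The difficulty is entirely in tracking, term by term, which factor carries which power of $\rho_0$: the half--power loss of the weight on the velocity side is exactly the feature of the physical--vacuum degeneracy \eqref{degen} that makes this harder than the $\operatorname{dist}^{\,b}$, $b\ge2$, case. A secondary point to keep in mind is that the quadratic form defining $\mathcal E$ is \emph{not} coercive on its own — the $(a^s_i W^r_s)(a^k_r W^i_k)$ piece is sign--indefinite — so coercive control of $\|\rho_0\bar\p^4 D\eta\|_0^2$ is obtained only after moving the curl part to the right and invoking Proposition~\ref{curl_est}, and the $O(T)$ loss there is one more reason $T$ must be taken small.
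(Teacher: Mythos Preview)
Your approach is essentially correct and very close to the paper's, with one organizational difference worth noting. You combine the $\bar\p^4 a^k_i$ and $\bar\p^4 J^{-2}$ contributions into a single symmetric tensor $T^{rs}_{ik}=a^s_r a^k_i+a^s_i a^k_r$ and read off coercivity directly from the pointwise identity $W^r_s W^s_r=|DW|^2-|\operatorname{curl} W|^2$. The paper instead keeps the two pieces separate as integrals $\mathcal I_1$ and $\mathcal I_2$: for $\mathcal I_1$ it uses \eqref{a1}, writes the antisymmetric commutator explicitly as $-J^2\operatorname{curl}_\eta\bar\p^4\eta\cdot\operatorname{curl}_\eta\bar\p^4 v$, and extracts $\tfrac12\frac{d}{dt}\int\rho_0^2(J^{-3}|D_\eta\bar\p^4\eta|^2-J^{-1}|\operatorname{curl}_\eta\bar\p^4\eta|^2-J^{-1}|\operatorname{div}_\eta\bar\p^4\eta|^2)$; the lost $\operatorname{div}$ term is then recovered from $\mathcal I_2$. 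The two computations are algebraically equivalent, and yours is a bit cleaner.

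Where your write-up is thinner than the paper is the remainder analysis. The paper integrates by parts in $x_k$ and then in $t$ exactly as you indicate, but the delicate case (their $l=1$) is not handled by the generic ``boundary-in-time via Young'' argument you sketch. After the time integration one faces $\int_0^T\!\int_\Omega \bar\p Dv\,\rho_0\bar\p^3 a^k_i\,\rho_0\bar\p^4\eta^i,_k$, and here $v$ is only controlled in $H^3$ by $E$, not $H^{3.5}$; the paper integrates by parts once more in the \emph{horizontal} variable (using that $a^3_i$ is quadratic in $\bar\p\eta$ to treat $k=3$ as well), and then invokes a spacetime interpolation estimate $\|v\|^2_{L^2(0,T;H^{3.5})}\le M_0+\delta\sup\|\eta\|_4^2+CT\sup(\|\eta\|_4^2+\|v_t\|_3^2)$. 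It is this interpolation, not a Young's inequality on a temporal endpoint, that is the actual source of the $\delta\sup E$ term here; for the spatial endpoint at $t=T$ the paper instead uses the fundamental theorem of calculus (everything vanishes at $t=0$ since $\eta(0)=e$) to manufacture a factor of $T$. Your description of the mechanism producing $\delta$ is therefore slightly off, though the overall strategy you outline is sound.
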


\begin{proof}
  Letting $\bar \p^4$ act on $\rho_0 v_t^i + a^k_i (\rho_0^2 J^{-2} ),_k=0$, and taking the
$L^2(\Omega)$-inner product with $\bar \p^4 v^i$, we obtain 
\begin{align*} 
{\frac{1}{2}} {\frac{d}{dt}}  \int_\Omega &\rho_0 |\bar \p^4 v|^2 dx + \int_\Omega \bar \p^4 a^k_i (\rho_0^2 J^{-2} ),_k \bar \p^4 v^i dx 
+ \int_\Omega a^k_i (\rho_0^2 \bar \p^4 J^{-2} ),_k \bar \p^4 v^i dx \\
&= \sum_{l=1}^3 c_l \int_\Omega \bar \p^{4-l} a^k_i \, (\rho_0^2 \bar \p^l J^{-2} ),_k \, \bar \p^4 v^i \, dx \,.
\end{align*} 
Integrating the first term from $0$ to $t\in (0,T]$ produces the first term on the left-hand side of (\ref{energy1_2d}). 

We define the following three integrals
\begin{align*}
\mathcal{I} _1 &=   \int_\Omega \bar \p^4 a^k_i (\rho_0^2 J^{-2} ),_k \bar \p^4 v^i dx  \, \\
\mathcal{I}_2  & =  \int_\Omega a^k_i (\rho_0^2 \bar \p^4 J^{-2} ),_k \bar \p^4 v^i dx \, \\
\mathcal{R} & = \sum_{l=1}^3 c_l \int_\Omega \bar \p^{4-l} a^k_i \, (\rho_0^2 \bar \p^l J^{-2} ),_k \, \bar \p^4 v^i \, dx \,.
\end{align*} 
The last integral introduces our notation $ \mathcal{R} $ for the {\it remainder}, which throughout the paper will consist of integrals of lower-order terms
which can, via elementary inequalities together with our assumptions (\ref{assumptions_2d}),  easily be shown to satisfy the following estimate:
\begin{equation}\label{remainder}
\int_0^T  \mathcal{R} (t)dt \le M_0  + \delta \sup_{t \in [0,T]} E(t) + C\, T\, P ( \sup_{t \in [0,T]} E(t)) \,.
\end{equation} 

The sum  of $\int_0^T[ \mathcal{I}_1(t) + \mathcal{I} _2(t)]dt$ together with the estimates for $ \operatorname{curl} \eta$ given by Proposition \ref{curl_est} will
provide the remaining energy contribution $  \int_\Omega \rho_0^2(x,t) | \bar \p^4 D\eta|^2 dx$ plus error terms which have the
same bound as $ \mathcal{R} $.

\vspace{.1 in}
\noindent
{\bf Analysis of  $\int_0^T \mathcal{R} dt $.}  
We integrate by parts with respect to $x_k$ and then with respect to the time derivative $ \p_t$, and use (\ref{a3}) to obtain that
\begin{align*} 
\mathcal{R} &  = - \sum_{l=1}^3 c_l 
 \int_0^T \int_\Omega  \bar \p^{4-l} a^k_i \, \rho_0^2 \bar \p^l J^{-2}   \ \bar \p^4 v^i,_k \ dxdt \\
& =  \sum_{l=1}^3 c_l  \int_0^T \int_\Omega \rho_0\left( \bar \p^{4-l} a^k_i  \bar \p^l J^{-2}\right)_t  \rho_0\bar \p^4 \eta^i,_k dxdt
-        \sum_{l=1}^3 c_l  \int_\Omega \rho_0  \bar \p^{4-l} {a}^k_i  \bar \p^l  J^{-2}  \rho_0  \bar \p^4 \eta^i,_k dx \Bigr|_0^T \,.
\end{align*} 

Notice that when  $l=3$,  the integrand in the spacetime integral on the right-hand side scales like
$ \ell \  [  \bar \p D\eta \, \rho_0  \bar \p^3 \p_t J^{-2}  + \bar \p Dv \,  \rho_0 \bar \p^3 J^{-2}  ] \ \rho_0 \bar \p^4 D \eta$ where $\ell$ denotes an $L^ \infty (\Omega)$
function. Since $\| \rho_0 \p_t^2 J^{-2} (t)\|^2_3$ is contained in the energy function $E(t)$ and since $\bar \p D \eta(t) \in L^ \infty (\Omega)$,
the first summand is estimated using an $L^ \infty$-$L^2$-$L^2$ H\"{o}lder's inequality, while for  the second summand, we use that
$\| \rho_0  J^{-2} (t)\|^2_4$ is contained in $E(t)$ together with an $L^4$-$L^4$-$L^2$ H\"{o}lder's inequality.

When $l=1$, the integrand in the spacetime integral on the right-hand side scales like
$ \ell \  [  \bar \p D\eta \, \rho_0  \bar \p^3  {a_t}^k_i  + \bar \p Dv \,  \rho_0 \bar \p^3 a^k_i ] \ \rho_0 \bar \p^4 \eta^i,_k$.
 Since $\| \rho_0 \bar \p^3 Dv_t (t)\|^2_0$ is contained in the energy function $E(t)$ and since $\bar \p D \eta \in L^ \infty (\Omega)$,
the first summand is estimated using an $L^ \infty$-$L^2$-$L^2$ H\"{o}lder's inequality.   We write the second summand as
$$ \bar \p Dv \,  \rho_0 \bar \p^3 a^\beta_i  \ \rho_0 \bar \p^4 \eta^i,_\beta+ \bar \p Dv \,  \rho_0 \bar \p^3 a^3_i  \ \rho_0 \bar \p^4 \eta^i,_3.$$  
We estimate
\begin{align}
&
\int_0^T\int_\Omega   \bar \p Dv \,  \rho_0 \bar \p^3 a^\beta_i  \ \rho_0 \bar \p^4 \eta^i,_\beta \ dxdt  =-
\int_0^T\int_\Omega  [ \bar \p Dv \,  \rho_0 \bar \p^3 a^\beta_i,_\beta  \ \rho_0 \bar \p^4 \eta^i + \bar\p Dv,_\beta\,  \rho_0 \bar \p^3 a^\beta_i  \ \rho_0 \bar \p^4 \eta^i]
dxdt \nonumber \\
& \qquad\qquad
\le C \int_0^T \bigl( \| \bar \p  Dv(t)\|_{L^3(\Omega)} \| \rho_0 \bar \p^4 a (t) \|_0 \ \|\rho_0 \bar \p^4 \eta(t) \|_{L^6(\Omega)}\nonumber \\
& \qquad\qquad\qquad\qquad\qquad\qquad\qquad
+ \| \bar \p^2  Dv(t)\|_{L^3(\Omega)} \| \rho_0 \bar \p^4\eta(t) \|_{L^6(\Omega)} \|\bar \p^3 a \|_0 \bigr) dt \nonumber \\
& \qquad\qquad
\le C \int_0^T \bigl( \| \bar \p  Dv(t)\|_{H^{0.5}(\Omega)} \| \rho_0 \bar \p^4 a(t) \|_0 \ \|\rho_0 \bar \p^4 \eta(t) \|_1 \nonumber\\
& \qquad\qquad\qquad\qquad\qquad\qquad\qquad
+ \| \bar \p^2  Dv(t)\|_{H^{0.5}(\Omega)} \| \rho_0 \bar \p^4\eta(t) \|_1 \  \|\bar \p^3 a\|_0 \bigr) dt \nonumber \\
& \qquad\qquad
\le C \int_0^T \bigl( \|v(t)\|_{H^{3.5}(\Omega)} \| \rho_0 \bar \p^4 D\eta (t) \|_0^2 +   \|v(t)\|_{H^{2.5}(\Omega)} \| \rho_0 \bar \p^4 D\eta (t) \|_0 \|\eta(t) \|_4 \nonumber \\
& \qquad\qquad\qquad\qquad\qquad\qquad\qquad
+ \| v(t)\|_{H^{3.5}(\Omega)} \|  \eta(t) \|_4^2 \bigr) dt \,, \label{Rest_2d}
\end{align} 
where we have used H\"{o}lder's inequality, followed by the Sobolev embeddings 
$$
H^{0.5}(\Omega) \hookrightarrow L^3(\Omega) \ \ \text{ and } \ \ H^{1}(\Omega) \hookrightarrow L^6(\Omega) \,.
$$
We also rely on the interpolation estimate 
\begin{align} 
\|v\|^2_{L^2(0,T; H^{3.5}(\Omega))} & \le C\bigl(  \|v(t)\|_3 \|\eta\|_4 \bigr)\Bigr|^0_T +C \|v_t\|_{L^2(0,T; H^3(\Omega))}   \|\eta\|_{L^2(0,T; H^4(\Omega))} \nonumber \\
&\le M_0 + \delta \sup_{t \in [0,T]} \|\eta(t)\|_4^2+  C \, T\, \sup_{t \in [0,T]}  \Bigl(\|\eta(t)\|^2_4+ \|v_t(t)\|^2_3\Bigr) \,, \label{interp1}
\end{align} 
where the last inequality follows from Young's and  Jensen's inequalities.   Using this together with the Cauchy-Schwarz inequality,  (\ref{Rest_2d}) is
bounded by $C \, T\, P( \sup_{t \in [0,T]} E(t))$.   Next, since  (\ref{a3i}) shows that each component of $a^3_i$ is quadratic in $\bar \p \eta$, we see that the same analysis shows the
spacetime integral of $\bar \p Dv \,  \rho_0 \bar \p^3 a^3_i  \ \rho_0 \bar \p^4 \eta^i,_3$ has the same bound, and so we have estimated
the case $l=1$.  

 For the case that $l=2$, the integrand in the spacetime integral on the right-hand side  of the expression for $ \mathcal{R} $
scales like $ \ell \   \bar \p^2 D\eta \,  \bar \p^2 D v   \ \rho_0 \bar \p^4D\eta$, so that an $L^6-L^3-L^2$ H\"{o}lder's inequality, followed
by the same analysis as for the case $l=1$ provides the same bound as for the case $l=1$.

To deal with the space integral on the right-hand side of the expression for $ \mathcal{R} $, the integral at time $t=0$ is equal to zero
since $\eta(x,0)=x$, whereas the integral evaluated at $t=T$ is written, using the fundamental theorem of calculus, as
$$
-        \sum_{l=1}^3 c_l  \int_\Omega \rho_0  \bar \p^{4-l} {a}^k_i  \bar \p^l  J^{-2}  \rho_0  \bar \p^4 \eta^i,_k dx\Bigr|_{t=T}=
-        \sum_{l=1}^3 c_l  \int_\Omega \rho_0  \int_0^T (\bar \p^{4-l} {a}^k_i  \bar \p^l  J^{-2})_t  \rho_0  \bar \p^4 \eta^i,_k(T) dx
$$
which can be estimated in the identical fashion as the corresponding spacetime integral.   As such, we have shown that $ \mathcal{R} $
has the claimed bound (\ref{remainder}).

\vspace{.1 in}
\noindent
{\bf Analysis of the integral $ \mathcal{I} _1$.}  Because $\rho_0 =0$ on $\Gamma=\{ x_3=1\}$, we integrate by parts to find that
\begin{align*} 
\mathcal{I} _1 &=   -\int_\Omega \rho_0^2 J^{-2}\,  \bar \p^4 a^k_i  \bar \p^4 v^i,_k dx + \int_{\{x_3=0\}} \rho_0^2 J^{-2} \, \bar \p ^4 a^3_i \bar \p^4 v^i \, dx_1dx_2 \\
&=   -\int_\Omega \rho_0^2 J^{-2}\,  \bar \p^4 a^k_i  \bar \p^4 v^i,_k dx \,,
\end{align*} 
since on the fixed boundary $\{x_3=0\}$,  $\eta^3 = x_3$ so that according to (\ref{a3i}), the components $a^3_1=0$ and $a^3_2 =0$ on
$\{x_3=0\}$, and $v^3=0$ on $\{x_3=0\}$, so that
 $\bar \p^4 a^3_i  \bar \p^4v^i  =0 $ on$\{ x_3=0\}$.

To estimate ${{\mathcal I} _1}$,
we use the formula (\ref{a1}) for horizontally differentiating the cofactor matrix:
$$
{\mathcal{I} _1} =
\int_\Omega { \rho_0}^2  J  ^{-3} \,
\bar\p ^4 \eta^r,_s \, [a^s_i a^k_r - a^s_r a^k_i] 
\  \bar\p  ^4  v^i,_k \, dx   + \mathcal{R}\,,
$$
where the remainder $\mathcal{R}$ satisfies (\ref{remainder}).
We decompose the highest-order term in ${ \mathcal{I} _1}$ as the sum of the following two integrals:
\begin{align*}
{{\mathcal I}_1}_a & =  \int_\Omega  { \rho_0}^2 J  ^{-3} \
(\bar\p ^4 \eta^r,_s a^s_i)(\bar\p  ^4   v^i,_k  a^k_r) dx,  \\
{{\mathcal I}_1}_b & = -\int_\Omega { \rho_0}^2 J  ^{-3} \
(\bar\p ^4 \eta^r,_s a^s_r) (\bar\p  ^4  v^i,_k  a^k_i) dx  \,.  
\end{align*}
Since $v= \eta _t$, ${\mathcal{I} _1}_a$ is an exact derivative modulo an antisymmetric
commutation with respect to the free indices $i$ and $r$; namely,
\begin{equation}\label{newss0}
\bar\p ^4 \eta ^r,_s a^s_i \bar\p ^4 v^i,_k a^k_r
=\bar\p ^4 \eta ^i,_s a^s_r \bar\p ^4 v^i,_k a^k_r
+(\bar\p ^4 \eta ^r,_s a^s_i - \bar\p ^4 \eta ^i,_s a^s_r)\bar\p ^4 v^i,_k a^k_r \,.
\end{equation} 
 Using the notation 
 $$[D_\eta F]^i_r = a^s_r F^i,_s  \text{ for any vector field } F\,,$$
\begin{equation}\label{newss1}
\bar\p ^4 \eta ^i,_s a^s_r \bar\p ^4 v^i,_k a^k_r = {\frac{1}{2}} \frac{d}{dt} |D_\eta \bar\p ^4 \eta |^2
- {\frac{1}{2}} \bar\p ^4 \eta^r,_s\, \bar\p ^4\eta^i,_k  \ ( a^s_r a^k_i)_t \,,
\end{equation} 
so the first term on the right-hand side of (\ref{newss0}) produces an  exact derivative in time.

For the second term on the right-hand side of (\ref{newss0}), note the identity
\begin{equation}\label{newss2}
(\bar\p ^4 \eta ^r,_s a^s_i - \bar\p ^4 \eta ^i,_s a^s_r)\bar\p ^4 v^i,_k a^k_r
= -J^2 \varepsilon _{ijk} \bar \p ^4 \eta ^k,_r A^r_j \, \varepsilon _{imn}\bar \p ^4 v^n,_s A^s_m\,.
\end{equation} 
We have used the permutation symbol $\varepsilon$ to encode the anti-symmetry in this relation, and the basic
fact that the trace of the product of  symmetric and antisymmetric matrices is equal to zero.

Recalling our notation
$[{\operatorname{curl}} _\eta F]^i = \varepsilon_{ijk} F^k,_r A^r_j$,
(\ref{newss2}) can be written as
\begin{equation}\label{newss3}
(\bar\p ^4 \eta ^r,_s a^s_i - \bar\p ^4 \eta ^i,_s a^s_r)\bar\p ^4 v^i,_k a^k_r
= -J^2 \operatorname{curl} _\eta \bar\p ^4 \eta  \cdot \operatorname{curl} _\eta \bar\p ^4 v\,,
\end{equation} 
which can also be written as an exact derivative in time:
\begin{equation}\label{newss4}
\operatorname{curl} _\eta \bar\p ^4 \eta  \cdot \operatorname{curl} _\eta \bar\p ^4 v
= {\frac{1}{2}} \frac{d}{dt} | \operatorname{curl} _\eta \bar\p ^4 \eta |^2 - 
\bar\p ^4 \eta ^k,_r \bar\p ^4 \eta ^k,_s (A^r_j A^s_j)_t + 
\bar\p ^4 \eta ^k,_r \bar \p ^4 \eta ^j,_s (A^r_j A^s_k)_t \,.
\end{equation} 
The terms in (\ref{newss1}) and (\ref{newss4}) which are not the exact time derivatives are quadratic
in $\rho_0\bar \p ^4 D \eta $ with coefficients in $L^\infty ( [0,T] \times \Omega )$; denoting the
integral over $\Omega $ of
such terms by $\mathcal{Q}_{ \rho_0 \bar\p ^4 D \eta }$ ,
\begin{equation}\nonumber
{\mathcal{I} _1}_a = {\frac{1}{2}} \frac{d}{dt}  \int_\Omega { \rho_0}^2 J^{-3} |D_\eta \bar \p ^4
 \eta  |^2 dx
-
{\frac{1}{2}} \frac{d}{dt}  \int_\Omega { \rho_0}^2 \,   J^{-1} | \operatorname{curl} _\eta
\bar \p ^4\eta |^2 dx +\mathcal{Q}_{ \rho_0\bar \p ^4 D \eta } + \mathcal{R} \,,
\end{equation} 
where $\int_0^T |\mathcal{Q}_{ \rho_0 \bar \p ^4 D \eta }|\, dt \le C\,T\,P( \sup_{t \in [0,T]} E(t))$,
and $ \mathcal{R} $ satisfies (\ref{remainder}).

With the notation  $\operatorname{div} _\eta F = A^j_i F^i,_j$,
the differentiation formula (\ref{J1}) shows that  $ {\mathcal{I} _1}_b$ can be written as
\begin{equation}\nonumber
{\mathcal{I} _1}_b = 
-{\frac{1}{2}} \frac{d}{dt}  \int_\Omega { \rho_0}^2\,  J^{-1} | \operatorname{div} _\eta
\bar \p ^4\eta |^2 dx +\mathcal{Q}_{ \rho_0\bar\p ^4 D \eta } + \mathcal{R} \,.
\end{equation} 
It follows that
\begin{align*} 
\mathcal{I} _1& = {\frac{1}{2}} \frac{d}{dt}  \int_\Omega { \rho_0}^2 \left( J^{-3} | D_\eta \bar \p ^4 \eta |^2 -  J^{-1} | \operatorname{curl} _\eta\bar \p ^4\eta |^2 
-J^{-1} | \operatorname{div} _\eta \bar\p ^4\eta |^2 \right) dx + \mathcal{R} \\
&= {\frac{1}{2}} \frac{d}{dt}  \int_\Omega { \rho_0}^2 \left( J^{-3} | D \bar \p ^4 \eta |^2 -  J^{-1} | \operatorname{curl} _\eta\bar \p ^4\eta |^2 
-J^{-1} | \operatorname{div} _\eta \bar\p ^4\eta |^2 \right) dx + \mathcal{R}\,,
\end{align*} 
where we have used the fundamental theorem of calculus for the second equality on the term $D_\eta \bar\p^4 \eta$.

\vspace{.1 in}
\noindent
{\bf Analysis of the integral $ \mathcal{I} _2$.}
Integration by parts once again yields
$$
\mathcal{I}_2  =  - \int_\Omega \rho_0^2 \bar \p^4 J^{-2} \,  a^k_i   \bar \p^4 v^i,_k dx \,.
$$
Since $\bar \p^4 J^{-2} = - 2 J^{-3} \bar \p^4 J$ plus lower-order terms, which have  at most three horizontal derivatives acting on
$J$.  For such lower-order terms, we integrate by parts with respect to $\p_t$, and estimate the resulting integrals in
the same manner as we estimated the remainder term $ \mathcal{R} $, and obtain the same bound.

Thus, 
\begin{align*} 
\mathcal{I} _2 &= 2 \int_\Omega \rho_0^2 J^{-3} a^r_s \bar \p^4 \eta^s,_r\ a^k_i   \bar \p^4 v^i,_k dx  + \mathcal{R}  \\
&=  {\frac{d}{dt}} \int_\Omega \rho_0^2 J^{-3} a^r_s \bar \p^4 \eta^s,_r\ a^k_i   \bar \p^4 \eta^i,_k dx  
-  \int_\Omega \rho_0^2 (J^{-3} a^r_sa^k_i )_t  \ \bar \p^4 \eta^s,_r \bar \p^4 \eta^i,_k dx + \mathcal{R} 
\end{align*} 
Given our identities for differentiating $a$ and $J$, the Sobolev embedding theorem together with our assumptions (\ref{assumptions_2d}) and
the Cauchy-Schwarz inequality show that
$$
\int_0^T\int_\Omega \rho_0^2 (J^{-3} a^r_sa^k_i )_t  \ \bar \p^4 \eta^s,_r \bar \p^4 \eta^i,_k dxdt \le C \,T\, \sup_{t \in [0,T]} E(t) \,;
$$
consequently, we can write
\begin{equation}\label{szz1_2d}
  \int_\Omega \rho_0^2 J^{-3} a^r_s \bar \p^4 \eta^s,_r\ a^k_i   \bar \p^4 \eta^i,_k dx   = M_0 + \int_0^t [\mathcal{I} _2(t')+ \mathcal{R} (t')] dt'    \,.
\end{equation} 

On the other hand,
\begin{align} 
 \int_\Omega& \rho_0^2 J^{-3} a^r_s \bar \p^4 \eta^s,_r\ a^k_i   \bar \p^4 \eta^i,_k dx \nonumber   \\
 & = 
 \int_\Omega \rho_0^2 J^{-2} \bigl( \bar \p^4 \operatorname{div} \eta +   \bar \p^4 \eta^s,_r \int_0^t {a_t}^r_s dt' \bigr) \ 
 \bigl( \bar \p^4 \operatorname{div} \eta +   \bar \p^4 \eta^i,_k \int_0^t {a_t}^k_i dt' \bigr)dx  \nonumber \\
 & = 
 \int_\Omega \rho_0^2 J^{-2} |\bar \p^4 \operatorname{div} \eta |^2 dx +   2\int_\Omega \rho_0^2 J^{-2} \bar \p^4 \operatorname{div} \eta\  \bar \p^4 \eta^s,_r \int_0^t {a_t}^r_s dt'  \, dx \nonumber \\
 &
 \qquad \qquad \qquad
 +  \int_\Omega \rho_0^2 J^{-2}\  \bar \p^4 \eta^s,_r \int_0^t {a_t}^r_s dt'  \
 \bar \p^4 \eta^i,_k \int_0^t {a_t}^k_i dt'  \ dx   \label{szz2_2d}
\end{align} 
Yet another application of the Sobolev embedding theorem together with our assumptions (\ref{assumptions_2d}) and
the Cauchy-Schwarz inequality shows that the second and third integrals on the right-hand side are bounded by $M_0 +
C\, T\, \sup_{t \in [0,T]} E(t)$, so that combining (\ref{szz1_2d}) and (\ref{szz2_2d}), we find that
\begin{equation}\label{4_2d}
 \int_\Omega \rho_0^2 J^{-2} |\bar \p^4 \operatorname{div} \eta |^2 dx  = M_0 + \int_0^t [\mathcal{I} _2(t')  + \mathcal{R}(t')] dt'   \,.
\end{equation}

\vspace{.1 in}
\noindent
{\bf Summing inequalities.}  
Summing (\ref{4_2d}) with  $ \mathcal{I} _1$  yields 
\begin{align*} 
&\sup_{t \in [0,T]}  {\frac{1}{2}} 
 \Bigl[ \int_\Omega \rho_0^2J^{-2}  | \bar \p^4 D\eta|^2dx  +\int_\Omega \rho_0^2J^{-2}  | \bar \p^4  \operatorname{div} \eta|^2dx  
- \int_\Omega \rho _0^2J^{-2}  | \bar \p^4 \operatorname{curl} \eta|^2  dx\Bigr] \\
& \qquad\qquad\qquad\qquad\qquad  \le M_0 +  \delta \sup_{t \in [0,T]} E(t) + C\,T\, P( \sup_{t \in [0,T]} E(t)) \,.
\end{align*} 
Adding to this,  the inequality (\ref{curl_estimate}), and possible readjusting our
constants, we obtain the desired result, and complete the proof of the proposition.
\end{proof}

Since  $\eta \cdot T_ \alpha  = \eta^\alpha $ for $ \alpha =1,2$,  we have the following
\begin{corollary}\label{cor1_2d} For $ \alpha =1,2$, 
$$ \sup_{t \in [0,T]}  |\eta^\alpha |^2_{3.5}  \le M_0 + C\, T\, P
( \sup_{t \in [0,T]} E(t)) \,.$$
\end{corollary}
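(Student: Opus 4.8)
The plan is to derive the corollary directly from Proposition~\ref{prop1_2d} together with the Hodge-type elliptic estimate of Proposition~\ref{prop1} applied on the boundary $\Gamma$. First I would observe that since $\eta\cdot T_\alpha=\eta^\alpha$ with $T_\alpha$ the constant Cartesian tangent vectors, bounding $|\eta^\alpha|_{3.5}$ is the same as bounding $|\bar\p^3 \eta\cdot T_\alpha|_{0.5}$ up to lower-order tangential norms that are controlled by $\|\eta\|_4$ (hence by $E$) via the trace theorem $H^4(\Omega)\hookrightarrow H^{3.5}(\Gamma)$. The point of invoking Proposition~\ref{prop1_2d} is that its conclusion (\ref{energy1_2d}) contains the weighted interior quantity $\int_\Omega \rho_0^2|\bar\p^4 D\eta|^2\,dx$, and the physical vacuum normalization $\rho_0(x_3)=1-x_3$ means $\rho_0$ is comparable to the distance $d$ to $\Gamma$ near $\Gamma$; therefore the weighted embedding (\ref{w-embed}) with $p=2$ gives $\|\bar\p^4 D\eta\|_0^2 \lesssim \int_\Omega \rho_0^2(|\bar\p^4 D\eta|^2+|D\bar\p^4 D\eta|^2)\,dx$ only if we also control one more derivative, so instead I would apply (\ref{w-embed}) to $F=\bar\p^4\eta$ itself, yielding $\|\bar\p^4\eta\|_0^2 \le C\int_\Omega \rho_0^2(|\bar\p^4\eta|^2+|\bar\p^4 D\eta|^2)\,dx$, which by (\ref{energy1_2d}) and the crude bound $\|\bar\p^4\eta\|_0\le\|\eta\|_4\lesssim E^{1/2}$ is $\le M_0+\delta\sup E+CTP(\sup E)$. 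This is not yet an $H^{3.5}(\Gamma)$ bound, so the real engine must be the elliptic estimate.

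The key step is to apply the second estimate in (\ref{hodge}) with $s=4$ to the vector field $F=\eta$ (or, to stay at the right regularity for a boundary norm of order $3.5$, to $F=\bar\p^3\eta$ with $s=1$, which is cleaner): this bounds $\|\bar\p^3\eta\|_1$, hence $|\bar\p^3\eta|_{0.5}$ by trace, in terms of $\|\bar\p^3\eta\|_0$, $\|\operatorname{curl}\bar\p^3\eta\|_0$, $\|\operatorname{div}\bar\p^3\eta\|_0$, and $|\bar\p(\bar\p^3\eta)\cdot T_\alpha|_{-1/2}=|\bar\p^4\eta^\alpha|_{-1/2}$. The curl term is controlled by Proposition~\ref{curl_est} (specifically $\|\operatorname{curl}\eta\|_3^2$, the $a=0$ term), and the divergence term is controlled precisely by the estimate (\ref{4_2d}) for $\int_\Omega\rho_0^2 J^{-2}|\bar\p^4\operatorname{div}\eta|^2$ combined with the weighted embedding — again $\rho_0\sim d$ — to pass from the weighted $L^2$ norm to an unweighted lower-order norm, absorbing the error into $M_0+\delta\sup E+CTP(\sup E)$. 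The boundary term $|\bar\p^4\eta^\alpha|_{-1/2}$ is genuinely lower order (negative Sobolev index) and is estimated by interpolating against $|\bar\p^3\eta^\alpha|_{1/2}$ itself, which can be absorbed for $T$ small, or more simply by the trace of $\|\eta\|_4$. Running the loop: the estimate for $|\eta^\alpha|_{3.5}$ gets bounded by $M_0+\delta\sup E+CTP(\sup E)$, and since $|\eta^\alpha|_{3.5}^2$ appears inside $E$ with a fixed coefficient via the first sum in (\ref{formal}) (through $\|\eta\|_4\ge|\eta|_{3.5}$), choosing $\delta$ small completes the argument after the usual readjustment of constants.

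The main obstacle I anticipate is matching the \emph{weights with the Sobolev order} correctly: the energy controls $\int_\Omega\rho_0^2|\bar\p^4 D\eta|^2$, which through (\ref{w-embed}) with $p=2$ gives only $\|\bar\p^4 D\eta\|_{0}$ — i.e. $\|\eta\|_4$ plus an $H^0$-weighted gain — but to feed $\operatorname{div}_\eta$ and $\operatorname{curl}_\eta$ into the Hodge estimate (\ref{hodge}) at level $s$ one needs their $H^{s-1}$ norms, and a boundary norm of order $3.5$ forces $s-1=2.5$ on the interior side if one works directly with $\eta$. The resolution, and the step requiring care, is to differentiate tangentially first so that only \emph{one} full derivative is requested by the elliptic estimate (working with $\bar\p^3\eta$ and $s=1$), keeping the interior quantities exactly at the level $\rho_0^2|\bar\p^4 D\eta|^2$, $\rho_0^2|\bar\p^4\operatorname{div}\eta|^2$, $\rho_0^2|\bar\p^4\operatorname{curl}\eta|^2$ already supplied by (\ref{energy1_2d}), (\ref{4_2d}) and (\ref{curl_estimate}); the weighted-to-unweighted passage via $\rho_0\sim d$ then loses exactly half a derivative, landing on $H^{0.5}(\Gamma)$ for $\bar\p^3\eta$, i.e. $H^{3.5}(\Gamma)$ for $\eta$, which is precisely the claimed estimate.
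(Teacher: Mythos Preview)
Your approach has the direction of Proposition~\ref{prop1} reversed. In the Hodge estimate~(\ref{hodge}), the tangential boundary trace $|\bar\p F\cdot T_\alpha|_{s-3/2}$ sits on the \emph{right-hand side} as input data; with $F=\bar\p^3\eta$ and $s=1$, that term is $|\bar\p^4\eta^\alpha|_{-1/2}$, which on $\mathbb{T}^2$ is equivalent (modulo lower norms) to $|\eta^\alpha|_{3.5}$ --- precisely the quantity the corollary asks you to bound. Your proposed cures do not break this circularity: the four tangential derivatives exactly offset the negative index, so this term is \emph{not} lower order; interpolation against $|\bar\p^3\eta^\alpha|_{1/2}$ returns the same norm; writing $\bar\p^4\eta=\int_0^t\bar\p^4 v$ gains a factor of $T$ only at the price of $|v^\alpha|_{3.5}$, which is not contained in $E$; and bounding by the trace of $\|\eta\|_4$ produces $C\,E$ with no small parameter to absorb. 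There is also a second gap: the Hodge estimate requires the \emph{unweighted} $\|\operatorname{div}\bar\p^3\eta\|_0$, whereas~(\ref{4_2d}) controls only $\int_\Omega\rho_0^2|\bar\p^4\operatorname{div}\eta|^2\,dx$, and the weighted embedding~(\ref{w-embed}) applied to $\bar\p^3\operatorname{div}\eta$ would in addition need $\int_\Omega\rho_0^2|\bar\p^3(\operatorname{div}\eta),_3|^2\,dx$, a normal derivative not yet available.

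The paper's argument avoids both issues by using the tangential trace inequality~(\ref{tangentialtrace}) of Lemma~\ref{tangential_trace}, applied to $w=\bar\p^4\eta$. There $|w\cdot T_\alpha|_{-1/2}=|\bar\p^4\eta^\alpha|_{-1/2}$ is on the \emph{left}, and the right-hand side needs only $\|\bar\p^4\eta\|_0$ (supplied by your weighted-embedding step, which is correct) and $\|\operatorname{curl}\bar\p^4\eta\|_{H^1(\Omega)'}$ (one $\bar\p$ passes to the dual by integration by parts on $\mathbb{T}^2$, reducing to $\|\operatorname{curl}\eta\|_3$ from Proposition~\ref{curl_est}). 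No divergence term enters and there is no circularity.
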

\begin{proof}
The weighted embedding estimate (\ref{w-embed}) shows that
$$
\| \bar \p^4 \eta\|^2_0 \le C  \int_\Omega \rho_0^2 \bigl( |\bar \p^4 \eta|^2 + |\bar\p^4D\eta|^2 \bigr) dx \,.
$$

Now 
$$\sup_{t \in [0,T]} \int_\Omega \rho_0^2 |\bar \p^4 \eta|^2 dx = \sup_{t \in [0,T]} \int_\Omega \rho_0^2 \left| \int_0^t \bar \p^4 v dt'\right|^2 dx \le T^2 \sup_{t \in [0,T]} \| \sqrt{\rho_0} \bar \p^4 v \|^2_0$$
It follows from Proposition \ref{prop1_2d} that
$$
\sup_{t \in [0,T]} \| \bar \p^4 \eta\|^2_0 \le M_0 + C\,T\, P( \sup_{t \in [0,T]} E(t)) \,.
$$
According to our curl estimates (\ref{curl_estimate}), $ \sup_{t \in [0,T]} \| \operatorname{curl} \eta\|^2_3 \le M_0 + C\,T\, P( \sup_{t \in [0,T]} 
E(t))$, from which it follows that
$$
\sup_{t \in [0,T]} \| \bar \p^4 \operatorname{curl} \eta\|^2_{H^1(\Omega)'} \le M_0 + C\,T\, P( \sup_{t \in [0,T]} E(t)) \,,
$$
since $\bar \p$ is a horizontal derivative, and integration by parts with respect to $\bar \p$ does not produce any boundary contributions.
From the tangential trace inequality (\ref{tangentialtrace}), we find that
$$
\sup_{t \in [0,T]} | \bar \p^4 \eta^\alpha |^2_{-1/2} \le M_0 + C\,T\,P( \sup_{t \in [0,T]} E(t)) \,,
$$
from which the assertion of the corollary follows.
\end{proof}

\subsection{The $\p_t^8$-problem}
\begin{proposition} \label{prop5_2d} For $ \delta >0$ and letting the constant $M_0$ depend on $1/ \delta $, 
\begin{align}
&\sup_{t \in [0,T]} \left( \int_\Omega \rho_0 |\p_t^8 v(x,t)|^2 dx + \int_\Omega \rho_0^2(x,t) |  \p_t^7 Dv(x,t)|^2 dx\right) \nonumber \\
& \qquad\qquad\qquad\qquad\qquad\qquad\qquad
 \le M_0
+ \delta \sup_{t \in [0,T]} E(t) + C\, T\, P( \sup_{t \in [0,T]} E(t)) \,. \label{energy5_2d}
\end{align} 
\end{proposition}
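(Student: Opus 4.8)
The plan is to repeat the argument of Proposition~\ref{prop1_2d} with the tangential operator $\bar\p^4$ everywhere replaced by the temporal operator $\p_t^8$; this substitution is forced by the scaling ``one $\p_t$ $\sim$ half a $D$'', which puts $\p_t^8$ and $\bar\p^4$ at the same level of $E$. I would apply $\p_t^8$ to (\ref{ce0.a}) and take the $L^2(\Omega)$-inner product with $\p_t^8 v^i$. Since $\rho_0$ is independent of $t$, the first term is $\frac12\frac{d}{dt}\int_\Omega\rho_0|\p_t^8 v|^2\,dx$, which after integration in $t$ gives the first term on the left of (\ref{energy5_2d}). Expanding $\p_t^8[a^k_i(\rho_0^2 J^{-2}),_k]$ by the Leibniz rule, the remaining terms split, exactly as in the $\bar\p^4$-problem, into
$$\mathcal{I}_1=\int_\Omega\p_t^8 a^k_i\,(\rho_0^2 J^{-2}),_k\,\p_t^8 v^i\,dx,\quad\mathcal{I}_2=\int_\Omega a^k_i\,(\rho_0^2\p_t^8 J^{-2}),_k\,\p_t^8 v^i\,dx,\quad\mathcal{R}=\sum_{l=1}^{7}c_l\int_\Omega\p_t^{8-l}a^k_i\,(\rho_0^2\p_t^l J^{-2}),_k\,\p_t^8 v^i\,dx.$$

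For $\mathcal{I}_1$ and $\mathcal{I}_2$ I would integrate by parts in $x_k$ and use the Piola identity (\ref{a3}). The boundary contribution over $\Gamma=\{x_3=1\}$ vanishes because $\rho_0=0$ there, and over $\{x_3=0\}$ it vanishes because $v^3=0$ there for all $t$ (so $\p_t^8 v^3=0$), while by (\ref{a3i}) and $\eta^3\equiv0$ on $\{x_3=0\}$ one has $a^3_1=a^3_2=0$ there for all $t$ (so all their $\p_t$-derivatives vanish). Using (\ref{a2}) one writes $\p_t^8 a^k_i=J^{-1}[a^s_r a^k_i-a^s_i a^k_r]\,\p_t^8\eta^r,_s$ plus terms with fewer time derivatives; since moreover $\p_t^8 v=\p_t(\p_t^8\eta)$, the algebra of (\ref{newss0})--(\ref{newss4}) carries over verbatim with $\bar\p^4\eta,\bar\p^4 v$ replaced by $\p_t^8\eta,\p_t^8 v$, and produces an exact time derivative of $\tfrac12\int_\Omega\rho_0^2\bigl(J^{-3}|D_\eta\p_t^8\eta|^2-J^{-1}|\operatorname{curl}_\eta\p_t^8\eta|^2-J^{-1}|\operatorname{div}_\eta\p_t^8\eta|^2\bigr)\,dx$ modulo a $\mathcal{Q}_{\rho_0\p_t^8 D\eta}$-term and an $\mathcal{R}$-term. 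For $\mathcal{I}_2$ one writes $\p_t^8 J^{-2}=-2J^{-3}\p_t^8 J$ plus lower order and $\p_t^8 J=a^r_s\p_t^8\eta^s,_r$ plus lower order, obtaining an exact time derivative that, as in the $\bar\p^4$-problem, identifies $\int_\Omega\rho_0^2 J^{-2}|\p_t^8\operatorname{div}\eta|^2\,dx$ up to terms of the desired type; throughout, $D_\eta,\operatorname{div}_\eta,\operatorname{curl}_\eta$ may be traded for $D,\operatorname{div},\operatorname{curl}$ at the cost of $O(T)$-errors, since $A$ differs from the identity by $\int_0^t A_t\,dt'$.

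Summing these identities as in the ``Summing inequalities'' step of Proposition~\ref{prop1_2d} yields
$$\sup_{t\in[0,T]}\tfrac12\Bigl[\|\rho_0 J^{-1}\p_t^8 D\eta\|_0^2+\|\rho_0 J^{-1}\p_t^8\operatorname{div}\eta\|_0^2-\|\rho_0 J^{-1}\p_t^8\operatorname{curl}\eta\|_0^2\Bigr]\le M_0+\delta\sup_{t\in[0,T]}E(t)+C\,T\,P(\sup_{t\in[0,T]}E(t)),$$
and adding to it the $l=4$ weighted curl bound $\|\rho_0\operatorname{curl}\p_t^8\eta\|_0^2$ from (\ref{curl_estimate}) of Proposition~\ref{curl_est} (again converting $\operatorname{curl}_\eta$ to $\operatorname{curl}$ up to $O(T)$) absorbs the negative curl term; with $\tfrac12\le J\le\tfrac32$ from (\ref{assumptions_2d}) this gives (\ref{energy5_2d}).

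The main obstacle is the bound $\int_0^T\mathcal{R}\,dt\le M_0+\delta\sup E+C\,T\,P(\sup E)$. After integrating by parts in $x_k$ (Piola kills the term with $\p_t^{8-l}a^k_i,_k$, and the boundary terms vanish as above) and then in $\p_t$ (with the $t=0$ endpoint controlled by $M_0$ and the $t=T$ endpoint rewritten by the fundamental theorem of calculus, exactly as in Proposition~\ref{prop1_2d}), $\mathcal{R}$ is a sum of terms of the schematic form $\rho_0\,\p_t^{m}a\cdot\rho_0\,\p_t^{n}J^{-2}\cdot\rho_0\,\p_t^8 D\eta$ with $m+n\le 9$. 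The new feature relative to the $\bar\p^4$-problem is that some factors carry \emph{odd}-order time derivatives, e.g.\ $\p_t^7 a\sim\p_t^7 D\eta$, which are not members of $E$; these are controlled by interpolation in time, most efficiently through the identity $\p_t^7 D\eta(t)=\p_t^7 D\eta(0)+\int_0^t\p_t^8 D\eta\,dt'$, so that $\|\rho_0\p_t^7 D\eta(t)\|_0\le M_0^{1/2}+T\sup_{[0,T]}\|\rho_0\p_t^8 D\eta\|_0$. With this in hand the remaining estimates are a transcription of those in Proposition~\ref{prop1_2d}: H\"older's inequality combined with the Sobolev embeddings $H^{0.5}(\Omega)\hookrightarrow L^3(\Omega)$ and $H^1(\Omega)\hookrightarrow L^6(\Omega)$, the weighted embedding (\ref{w-embed}), the interpolation estimate of the type (\ref{interp1}), and the assumptions (\ref{assumptions_2d}) --- with the roles of the tangential derivative $\bar\p$ and the time derivative $\p_t$ interchanged, and with the terms for $l$ near $7$ (top time-derivative on $J^{-2}$ rather than on $a$) handled symmetrically via $\|\rho_0\p_t^8 J^{-2}\|_0\le C\|\rho_0\p_t^8 D\eta\|_0$ plus lower order.
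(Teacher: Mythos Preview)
Your proposal is correct and follows essentially the same approach as the paper's own proof. The only cosmetic difference is notation: you organize the energy terms around $\p_t^8\eta$ whereas the paper writes $\p_t^7 v$, but since $v=\eta_t$ these are identical, and your handling of $\mathcal{I}_1$, $\mathcal{I}_2$, and $\mathcal{R}$ (integration by parts in $x_k$ then in $t$, fundamental theorem of calculus for odd-order time derivatives, and the $l=4$ weighted curl bound from Proposition~\ref{curl_est}) matches the paper step for step.
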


\begin{proof}
  Letting $\p_t^8$ act on $\rho_0 v_t^i + a^k_i (\rho_0^2 J^{-2} ),_k=0$, and taking the
$L^2(\Omega)$-inner product with $\p_t^8 v^i$, we obtain 
\begin{align*} 
{\frac{1}{2}} {\frac{d}{dt}}  \int_\Omega &\rho_0 |\p_t^8 v|^2 dx + \int_\Omega \p_t^8 a^k_i (\rho_0^2 J^{-2} ),_k \p_t^8 v^i dx 
+ \int_\Omega a^k_i (\rho_0^2 \p_t^8 J^{-2} ),_k \p_t^8 v^i dx \\
&= \sum_{l=1}^7 c_l \int_\Omega  \p_t^{8-l} a^k_i \, (\rho_0^2 \p_t^l J^{-2} ),_k \, dx \,.
\end{align*} 
Integrating the first term from $0$ to $t\in (0,T]$ produces the first term on the left-hand side of (\ref{energy5_2d}). 

We define the following three integrals
\begin{align*}
\mathcal{I} _1 &=   \int_\Omega\p_t^8 a^k_i (\rho_0^2 J^{-2} ),_k  \p_t^8 v^i dx  \, \\
\mathcal{I}_2  & =  \int_\Omega a^k_i (\rho_0^2 \p_t^8 J^{-2} ),_k \p_t^8 v^i dx \, \\
\mathcal{R} & = \sum_{l=1}^7 c_l \int_\Omega \p_t^{8-l} a^k_i \, (\rho_0^2 \p_t^l J^{-2} ),_k \, \p_t^8v^i \, dx \,.
\end{align*} 
The sum  of $\int_0^T[ \mathcal{I}_1(t) + \mathcal{I} _2(t)]dt$ together with the {curl} estimates given by Proposition \ref{curl_est} will
provide the remaining energy contribution $  \int_\Omega \rho_0^2(x,t) |  \p_t^7 Dv|^2 dx$ plus error terms which have the
same bound as $ \mathcal{R} $.

\vspace{.1 in}
\noindent
{\bf Analysis of  $\int_0^T \mathcal{R} dt $.}  
We integrate by parts with respect to $x_k$ and then with respect to the time derivative $ \p_t$ to obtain that
\begin{align*} 
\mathcal{R} &  = - \sum_{l=1}^7 c_l 
 \int_0^T \int_\Omega  \p_t^{8-l} a^k_i \, \rho_0^2 \p_t^l J^{-2}   \ \p_t^8 v^i,_k \ dxdt \\
& =  \sum_{l=1}^7 c_l  \int_0^T \int_\Omega \rho_0\left( \p_t^{8-l} a^k_i  \p_t^l J^{-2}\right)_t  \rho_0\p_t^7 v^i,_k dxdt
-        \sum_{l=1}^7 c_l  \int_\Omega \rho_0  \p_t^{8-l} {a}^k_i  \bar \p^l  J^{-2}  \rho_0  \p_t^7 v^i,_k dx \Bigr|_0^T \,.
\end{align*} 

Notice that when  $l=7$,  the integrand in the spacetime integral on the right-hand side scales like
$ \ell \   [ Dv_t \, \rho_0 \p_t^6Dv + Dv \, \rho_0 \p_t^7Dv ]  \, \rho_0 \p_t^7D v$ where $\ell$ denotes an $L^ \infty (\Omega)$
function. 
Since $\| \rho_0 \p_t^7 Dv (t)\|^2_0$ is contained in the energy function $E(t)$, $ D v_t(t)$ is bounded in $L^ \infty (\Omega)$,
and since we can write
$ \rho_0 \p_t^6 Dv(t) = \rho_0 \p_t^6 Dv(0) + \int_0^t \rho_0 \p_t^7 Dv(t')dt'$,
the first  and second summands are both estimated using an $L^ \infty$-$L^2$-$L^2$ H\"{o}lder's inequality.

The case $l=6$ is estimated exactly the same way as the case $l=3$ in the proof of Proposition \ref{prop1_2d}.   For the case
$l=5$, the integrand in the spacetime integral scales like
$\ell [ Dv_{tt} \rho_0 \p_t^6 J^{-2}  + Dv_{ttt}  \rho_0 Dv_{tttt}] \rho_0 \p_t^7 Dv$.  Both summands can be estimated using 
an $L^ 3$-$L^6$-$L^2$ H\"{o}lder's inequality.  The case $l=4$ is treated as the case $l=5$.  The case $l=3$ is also treated in
the same way as $l=5$.  The case $l=2$ is estimated exactly the same way as the case $l=1$ in the proof of Proposition \ref{prop1_2d}.
The case $l=1$ is treated in the same way as the case $l=7$.

To deal with the space integral on the right-hand side of the expression for $ \mathcal{R} $, the integral at time $t=0$ is bounded by $M_0$, whereas the integral evaluated at $t=T$ is written, using the fundamental theorem of calculus, as
\begin{align*} 
\sum_{l=1}^7 c_l  \int_\Omega \rho_0  \p_t^{8-l} {a}^k_i  \p_t^l  J^{-2}  \rho_0 \p_t^7 v^i,_k dx\Bigr|_{t=T} & =
\sum_{l=1}^7 c_l \int_\Omega \rho_0  \p_t^{8-l} {a}^k_i(0)  \p_t^l  J^{-2} (0) \rho_0 \p_t^7 v^i,_k(T) dx\\
& +  \sum_{l=1}^7 c_l  \int_\Omega \rho_0  \int_0^T (\p_t^{8-l} {a}^k_i  \p_t^l  J^{-2})_t dt'\,  \rho_0  \p_t^7v^i,_k(T) dx.
\end{align*} 
The first integral on the right-hand side is estimated using Young's inequality, and is bounded by $M_0 + \delta \sup_{t \in [0,T]} E(t)$,
while the second integral 
can be estimated in the identical fashion as the corresponding spacetime integral.   As such, we have shown that $ \mathcal{R} $
has the claimed bound (\ref{remainder}).

\vspace{.1 in}
\noindent
{\bf Analysis of the integral $ \mathcal{I} _1$.} As to the term ${\mathcal I} _1$,  using the identity (\ref{a2}),
the same computation as for the $\bar \p^4$-differentiated problem shows that
\begin{align*}
\rho_0^{2 }( \p_t^7 v^r,_s A^s_i)
\  (\p_t^8  v^i,_k A^k_r)
& = 
\frac{1}{2}\frac{d}{dt} |{\rho_0 } D_\eta \p_t^7 v(t)|^2
-
\frac{1}{2}\frac{d}{dt} |{\rho_0 }  {\operatorname{curl}} _\eta \p_t^7 v(t)|^2 \\
&  \qquad \qquad + {\frac{1}{2}}  
{ \rho_0}^2\p_t^7 v^k,_r \p_t^7 v^b,_s
(A^r_j A^s_m)_t [ \delta^j_m\delta^k_b - \delta^j_b \delta ^k_m] \,,
\end{align*}
and
\begin{align*}
-\rho_0^2(\p_t^7  v^r,_s\, A^s_r) \ (\p_t^8  v^i,_k  A^k_i) 
& = -{\frac{1}{2}} {\frac{d}{dt}} 
|{\rho_0} {\operatorname{div}} _\eta \p_t ^7 v|^2  
 + {\frac{1}{2}} 
 \rho_0^{2 } \p_t^7  v^r,_s \p_t^7  v^i,_k
(A^s_r A^k_i)_t  \,,
\end{align*}
and hence
\begin{align*} 
\mathcal{I} _1 = {\frac{1}{2}} \frac{d}{dt}  \int_\Omega { \rho_0}^2 \left( J^{-3} |D_\eta \p_t^7
 v  |^2 -  J^{-1} | \operatorname{curl} _\eta \p_t^7 v|^2 
-J^{-1} | \operatorname{div} _\eta \p_t^7 v|^2 \right) dx + \mathcal{R} \,.
\end{align*}

\vspace{.1 in}
\noindent
{\bf Analysis of the integral $ \mathcal{I} _2$.}
Integration by parts once again yields
$$
\mathcal{I}_2  =  - \int_\Omega \rho_0^2 \p_t^8 J^{-2} \,  a^k_i  \p_t^8 v^i,_k dx \,.
$$
Since $\p_t^8 J^{-2} = - 2 J^{-3} \p_t^8 J$ plus lower-order terms, which have at most seven time derivatives on $J$, and can be 
estimated in the same fashion as the remainder term $ \mathcal{R} $ above.

We see that
\begin{align*} 
\mathcal{I} _2 &= 2 \int_\Omega \rho_0^2 J^{-3} a^r_s \p_t^7v^s,_r\ a^k_i   \p_t^8 v^i,_k dx  + \mathcal{R}  \\
&=  {\frac{d}{dt}} \int_\Omega \rho_0^2 J^{-3} a^r_s \p_t^7 v^s,_r\ a^k_i   \p_t^7 v^i,_k dx  
-  \int_\Omega \rho_0^2 (J^{-3} a^r_sa^k_i )_t  \ \p_t^7 v^s,_r \p_t^7v^i,_k dx + \mathcal{R} 
\end{align*} 
Following our analysis of the term $ \mathcal{I} _2$ in the $\bar \p^4$-problem, we see that
 \begin{align} 
 \int_\Omega \rho_0^2 J^{-3} a^r_s \p_t^7v^s,_r\ a^k_i   \p_t^7 v^i,_k dx   = M_0 + \int_0^t [\mathcal{I} _2(t')+ \mathcal{R}(t')]dt'   \,.
 \label{szz3_2d}
\end{align} 
On the other hand,
\begin{align} 
 \int_\Omega& \rho_0^2 J^{-3} a^r_s\p_t^7 v^s,_r\ a^k_i   \p_t^7 v^i,_k dx  \nonumber \\
 & = 
 \int_\Omega \rho_0^2 J^{-2} \bigl( \p_t^7 \operatorname{div} v +   \p_t^7 v^s,_r \int_0^t {a_t}^r_s dt' \bigr) \ 
 \bigl( \p_t^7 \operatorname{div} v +  \p_t^7 v^i,_k \int_0^t {a_t}^k_i dt' \bigr)dx  \nonumber  \\
 & = 
 \int_\Omega \rho_0^2 J^{-2} |\p_t^7 \operatorname{div} v |^2 dx +  2 \int_\Omega \rho_0^2 J^{-2} \p_t^7 \operatorname{div} v\  \p_t^7 v^s,_r \int_0^t {a_t}^r_s dt'  \, dx  \nonumber \\
 &
 \qquad \qquad \qquad
 +  \int_\Omega \rho_0^2 J^{-2}\  \p_t^7 v^s,_r \int_0^t {a_t}^r_s dt'  \ \p_t^7 v^i,_k \int_0^t {a_t}^k_i dt'  \ dx   \label{szz4_2d}
\end{align} 
Yet another application of the Sobolev embedding theorem together with our assumptions (\ref{assumptions_2d}) and
the Cauchy-Schwarz inequality shows that the second and third integrals on the right-hand side are bounded by $M_0 +
C \sup_{t \in [0,T]} E(t)$, so that summing (\ref{szz3_2d}) and (\ref{szz4_2d}) shows that
\begin{equation}\label{14_2d}
\int_\Omega \rho_0^2 J^{-2} |\p_t^7 \operatorname{div} v |^2 dx  = M_0 + \int_0^t [ \mathcal{I} _2(t') + \mathcal{R}(t')]  dt'  \,.
\end{equation} 
\vspace{.1 in}
\noindent
{\bf Summing inequalities.}   
Summing (\ref{14_2d}) with $ \mathcal{I} _1$  yields  
\begin{align*} 
&\sup_{t \in [0,T]}  {\frac{1}{2}} 
 \Bigl[ \int_\Omega \rho_0^2J^{-2}  | \p_t^7 Dv|^2dx  + \rho_0^2J^{-2}  | \p_t^7  \operatorname{div} v|^2dx  
- \int_\Omega \rho _0^2J^{-2}  | \p_t^7 \operatorname{curl} v|^2  dx\Bigr] \\ 
& \qquad\qquad\qquad\qquad\qquad \le M_0 +  \delta \sup_{t \in [0,T]} E(t) + C\,T\, P( \sup_{t \in [0,T]} E(t)) \,.
\end{align*} 
Adding the curl estimate (\ref{curl_estimate}),  readjusting our
constants, we obtain the desired result, and complete the proof of the proposition.
\end{proof}

\subsection{The $\p_t^2 \bar \p^3$, $\p_t^4 \bar \p^2$, and $\p_t^6\bar \p$ problems}  Since we have provided detailed proofs
of the energy estimates for the two end-point cases of all space derivatives, the $\bar \p^4$ problem, and all time derivatives, the
$\p_t^8$ problem, we have covered all of the estimation strategies for all possible error terms in the three remaining
intermediated problems; meanwhile, the energy contributions for the three intermediate are found in the identical fashion as
for the $\bar \p^4$ and $\p_t^8$ problems.  As such we have the additional estimate
\begin{proposition} \label{additional_2d} For $ \delta >0$ and letting the constant $M_0$ depend on $1/ \delta $,  for $ \alpha =1,2$,
\begin{align*} 
& \sup_{t \in [0,T]} \sum_{a=1}^3 \Bigl[ | \p_t^{2a} \eta^\alpha (t)|_{3.5-a}^2 + \| \sqrt{\rho_0} \bar \p^{4-a} \ \p_t^{2a} v(t)\|_0^2
 + \| \rho_0 \bar \p^{4-a} \ \p_t^{2a} D\eta(t)\|^2_0\Bigr] \\
 &\qquad\qquad  \le M_0 + \delta \sup_{t \in [0,T]} E(t) + C\,T\, P( \sup_{t \in [0,T]} E(t)) 
\end{align*} 
\end{proposition}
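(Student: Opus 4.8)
The plan is to repeat, for each $a\in\{1,2,3\}$, the argument of Propositions \ref{prop1_2d} and \ref{prop5_2d} verbatim with the differential operator $\bar\p^4$ (resp. $\p_t^8$) replaced by $\p_t^{2a}\bar\p^{4-a}$; the two endpoint cases $a=0$ and $a=4$ were proved in full precisely so as to exhibit every estimation strategy that arises. Concretely, I would let $\p_t^{2a}\bar\p^{4-a}$ act on $\rho_0 v_t^i+a^k_i(\rho_0^2 J^{-2}),_k=0$, take the $L^2(\Omega)$-inner product with $\p_t^{2a}\bar\p^{4-a}v^i$, and integrate in time over $[0,T]$. The first term yields the energy $\frac12\|\sqrt{\rho_0}\bar\p^{4-a}\p_t^{2a}v(t)\|_0^2$ after integration from $0$ to $t$; the remaining terms organize into $\mathcal{I}_1=\int_\Omega\p_t^{2a}\bar\p^{4-a}a^k_i(\rho_0^2 J^{-2}),_k\,\p_t^{2a}\bar\p^{4-a}v^i\,dx$, $\mathcal{I}_2=\int_\Omega a^k_i(\rho_0^2\p_t^{2a}\bar\p^{4-a}J^{-2}),_k\,\p_t^{2a}\bar\p^{4-a}v^i\,dx$, and a commutator remainder $\mathcal{R}$ collecting the cross terms in which the derivatives are distributed between $a$ and $J^{-2}$.

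For $\mathcal{R}$ I would integrate by parts in $x_k$ and then in $\p_t$, using the Piola identity (\ref{a3}); the $x_3$-boundary terms vanish because $\rho_0=0$ on $\{x_3=1\}$ and because $a^3_\alpha=0$ and $v^3=0$ on $\{x_3=0\}$, properties preserved by both $\p_t$ and $\bar\p$ since these are tangential to the two faces. Each resulting term is then a product of one of the weighted quantities $\rho_0\bar\p^{4-a}\p_t^{2a}D\eta$, $\sqrt{\rho_0}\bar\p^{4-a}\p_t^{2a}v$, $\rho_0\p_t^{2a}J^{-2}$ appearing in $E(t)$ with factors of strictly lower derivative count; after distributing the $D$'s and $\p_t$'s in the worst way, every such term matches in structure one of the cases $l=1,2,3$ of Proposition \ref{prop1_2d} or $l=1,\dots,7$ of Proposition \ref{prop5_2d}, and I would bound it by the corresponding H\"older inequality ($L^\infty$-$L^2$-$L^2$, $L^3$-$L^6$-$L^2$, or $L^4$-$L^4$-$L^2$), the embeddings $H^{0.5}(\Omega)\hookrightarrow L^3(\Omega)$ and $H^1(\Omega)\hookrightarrow L^6(\Omega)$, the interpolation bound (\ref{interp1}), and the a priori assumptions (\ref{assumptions_2d}). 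The endpoint term at $t=0$ is bounded by $M_0$, and the one at $t=T$ is rewritten by the fundamental theorem of calculus and handled exactly as the corresponding spacetime integral, so $\int_0^T\mathcal{R}\,dt$ obeys (\ref{remainder}).

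For $\mathcal{I}_1$ I would integrate by parts in $x_k$ (the same boundary terms vanishing) and insert the cofactor differentiation formulas (\ref{a1})--(\ref{a2}) into the leading part of $\p_t^{2a}\bar\p^{4-a}a^k_i$, reducing $\mathcal{I}_1$ modulo $\mathcal{R}$ to $\int_\Omega\rho_0^2 J^{-3}\,\p_t^{2a}\bar\p^{4-a}\eta^r,_s\,[a^s_i a^k_r-a^s_r a^k_i]\,\p_t^{2a}\bar\p^{4-a}v^i,_k\,dx$. Because $v=\eta_t$ gives $\p_t^{2a}\bar\p^{4-a}v=\p_t(\p_t^{2a}\bar\p^{4-a}\eta)$, the algebra of Proposition \ref{prop1_2d} --- the split into $\mathcal{I}_{1a},\mathcal{I}_{1b}$ and the antisymmetric commutations (\ref{newss0})--(\ref{newss4}) --- applies unchanged and gives $\mathcal{I}_1=\frac12\frac{d}{dt}\int_\Omega\rho_0^2\bigl(J^{-3}|D\p_t^{2a}\bar\p^{4-a}\eta|^2-J^{-1}|\operatorname{curl}_\eta\p_t^{2a}\bar\p^{4-a}\eta|^2-J^{-1}|\operatorname{div}_\eta\p_t^{2a}\bar\p^{4-a}\eta|^2\bigr)\,dx+\mathcal{R}$. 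For $\mathcal{I}_2$ I would integrate by parts once more, write $\p_t^{2a}\bar\p^{4-a}J^{-2}=-2J^{-3}\p_t^{2a}\bar\p^{4-a}J$ plus lower-order pieces (absorbed into $\mathcal{R}$) and $\p_t^{2a}\bar\p^{4-a}J=a^r_s\p_t^{2a}\bar\p^{4-a}\eta^s,_r$ plus lower-order pieces from (\ref{J1})--(\ref{J2}), and repeat the computation (\ref{szz1_2d})--(\ref{4_2d}) to obtain $\int_\Omega\rho_0^2 J^{-2}|\bar\p^{4-a}\p_t^{2a}\operatorname{div}\eta|^2\,dx=M_0+\int_0^t[\mathcal{I}_2+\mathcal{R}]\,dt'$. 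Summing this with the time integral of $\mathcal{I}_1$, the two $\operatorname{div}$ contributions cancel and the surviving $-\int_\Omega\rho_0^2 J^{-2}|\bar\p^{4-a}\p_t^{2a}\operatorname{curl}\eta|^2\,dx$ is absorbed by the curl estimate (\ref{curl_estimate}) of Proposition \ref{curl_est}; after readjusting constants this yields the bounds for $\|\sqrt{\rho_0}\bar\p^{4-a}\p_t^{2a}v\|_0^2$ and $\|\rho_0\bar\p^{4-a}\p_t^{2a}D\eta\|_0^2$. Finally $|\p_t^{2a}\eta^\alpha|^2_{3.5-a}$ is obtained as in Corollary \ref{cor1_2d}: the weighted embedding (\ref{w-embed}) controls $\|\bar\p^{4-a}\p_t^{2a}\eta\|_0^2$ from the above together with $\|\rho_0\bar\p^{4-a}\p_t^{2a}\eta\|_0^2\le M_0+T^2\sup_{[0,T]}\|\sqrt{\rho_0}\bar\p^{4-a}\p_t^{2a}v\|_0^2$, using $\p_t^{2a}\eta(t)=\p_t^{2a}\eta(0)+\int_0^t\p_t^{2a}v\,dt'$; the curl estimate supplies $\|\bar\p^{4-a}\p_t^{2a}\operatorname{curl}\eta\|^2_{H^1(\Omega)'}$ with no boundary contribution since $\bar\p$ is horizontal, and the established $\operatorname{div}_\eta$ bound controls the divergence, so the tangential trace inequality (\ref{tangentialtrace}) gives $|\bar\p^{4-a}\p_t^{2a}\eta^\alpha|^2_{-1/2}$, i.e. $|\p_t^{2a}\eta^\alpha|^2_{3.5-a}$.

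I expect no new analytic difficulty, only bookkeeping: for each of the three operators one must verify that in every cross term of $\mathcal{R}$ and in the lower-order pieces of $\mathcal{I}_1$ and $\mathcal{I}_2$ the worst allocation of derivatives still leaves at least one factor regular enough to sit in $L^\infty$ or in a favorable $L^p$, and that every weighted quantity produced is already a summand of $E(t)$. Because one $\p_t$ counts as half a $\bar\p$ in the derivative scaling, the operators $\p_t^{2a}\bar\p^{4-a}$ interpolate strictly between the two endpoint cases $\bar\p^4$ and $\p_t^8$ treated in full, so this verification closes automatically; the only mild subtlety --- that the $x_3$-integration-by-parts boundary terms in $\mathcal{I}_1$, $\mathcal{I}_2$, and $\mathcal{R}$ vanish --- is settled exactly as in the $\bar\p^4$ problem, since $\p_t$ and $\bar\p$ are tangential to $\{x_3=0\}$ and $\{x_3=1\}$.
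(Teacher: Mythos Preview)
Your proposal is correct and follows exactly the approach the paper intends: the paper does not give a detailed proof of this proposition, stating only that the two endpoint cases $\bar\p^4$ and $\p_t^8$ already exhibit every estimation strategy needed, so that the energy and error terms for the three intermediate operators $\p_t^{2a}\bar\p^{4-a}$ are handled in the identical fashion. Your write-up in fact supplies more detail than the paper does, and correctly identifies the only point requiring care (the vanishing of boundary terms under integration by parts in $x_k$, which holds because $\p_t$ and $\bar\p$ are tangential to both boundary components).
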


\subsection{Additional elliptic-type estimates for normal derivatives}
Our energy estimates provide  a priori control of horizontal and time derivatives of $\eta$; it remains to gain a priori control of the
normal (or vertical) derivatives of $\eta$.   This is accomplished via a bootstrapping procedure relying on having $\p_t^7 v(t)$ bounded in
$L^2(\Omega)$.

\begin{proposition} \label{pt5v_2d} For $t \in [0,T]$, $\p_t^5 v(t) \in H^1(\Omega)$, $ \rho_0 \p_t^6 J^{-2} (t) \in H^1(\Omega)$ and
$$
\sup_{t \in [0,T]} \left( \| \p_t^5 v(t)\|^2_1 +  \| \rho_0 \p_t^6 J^{-2} (t)\|^2_1 \right) \le 
M_0 + \delta  {\sup_{t\in[0,T]}} E(t) + C \, T\, P({\sup_{t\in[0,T]}} E(t)) \,.
$$
\end{proposition}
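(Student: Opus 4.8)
The plan is to recover the single missing (normal) derivative of $\p_t^5v$ from the div--curl elliptic estimate of Proposition~\ref{prop1}, and then to read off the bound for $\rho_0\p_t^6J^{-2}$ from the elliptic form (\ref{ce_elliptic}) of the Euler equations, in which the $\rho_0$ weight compensates exactly for the fact that (\ref{ce_elliptic}) is only first order in $J^{-2}$. The governing heuristic, as throughout the paper, is that one time derivative costs half a space derivative while a factor of $\rho_0$ buys back one space derivative via (\ref{ce_elliptic}); the two estimates of the proposition are coupled and should be established together. The only genuinely new input is that $\p_t^7v=\p_t^8\eta$ is bounded in $L^2(\Omega)$, since $\|\p_t^8\eta\|_0^2$ is one of the terms of $E(t)$.

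First I would apply the second estimate in (\ref{hodge}) with $s=1$ and $F=\p_t^5v$, so that
$$\|\p_t^5v\|_1\le\bar C\bigl(\|\p_t^5v\|_0+\|\operatorname{curl}\p_t^5v\|_0+\|\operatorname{div}\p_t^5v\|_0+|\bar\p\,\p_t^5v\cdot T_\alpha|_{-1/2}\bigr).$$
The $L^2$ term is handled by the fundamental theorem of calculus, $\p_t^5v(t)=\p_t^5v(0)+\int_0^t\p_t^6v\,dt'$, with $\p_t^6v=\p_t^7\eta$ estimated from the data and $\|\p_t^8\eta\|_0$ again by the fundamental theorem of calculus, contributing $M_0+C\,T\,P(\sup_{t\in[0,T]}E(t))$. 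The curl term equals $\|\operatorname{curl}\p_t^6\eta\|_0^2$, which is exactly the $a=3$ summand of the curl estimate (\ref{curl_estimate}) of Proposition~\ref{curl_est}. For the boundary term, since $v\cdot T_\alpha=v^\alpha=\p_t\eta^\alpha$, we have $\p_t^5v\cdot T_\alpha=\p_t^6\eta^\alpha$, so $|\bar\p\,\p_t^5v\cdot T_\alpha|_{-1/2}\le C|\p_t^6\eta^\alpha|_{1/2}$, which is controlled by Proposition~\ref{additional_2d}. Thus everything reduces to estimating $\|\operatorname{div}\p_t^5v\|_0$.

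For the divergence I would use the identity $J_t=J\operatorname{div}_\eta v$, which gives $\operatorname{div}_\eta v=\p_t\ln J=-\tfrac{1}{2}J^2\p_tJ^{-2}$, hence $\operatorname{div}_\eta\p_t^5v=-\tfrac{1}{2}J^2\p_t^6J^{-2}+(\text{l.o.t.})$, while $\operatorname{div}\p_t^5v=\operatorname{div}_\eta\p_t^5v+(\delta^j_i-A^j_i)\p_t^5v^i,_j+(\text{l.o.t.})$ with $\delta-A=-\int_0^tA_t=O(T)$. So, modulo $O(T)$-multiples of $\|\p_t^5v\|_1$ (absorbed by smallness of $T$) and terms absorbable into $\delta\sup_{t\in[0,T]}E(t)$, the quantity $\|\operatorname{div}\p_t^5v\|_0$ is controlled once $\|\rho_0\p_t^6J^{-2}\|_0$ (and lower-order $J$-quantities) are; those I would extract from the energy estimate Proposition~\ref{prop5_2d}, which controls $\rho_0J^{-1}\p_t^7\operatorname{div}v$, via the fundamental theorem of calculus and the weighted Sobolev embedding (\ref{w-embed}), tracking carefully which pieces are $O(T)$, which are $\delta$-absorbable top normal derivatives, and which are genuinely lower order. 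For $\rho_0\p_t^6J^{-2}$ itself I would $\p_t^6$-differentiate the Piola form $\rho_0v_t^i=-(a^k_i\rho_0^2J^{-2}),_k$ and use (\ref{a3}); isolating the top term and dividing by $\rho_0$ gives an identity of the schematic form $\rho_0a^k_i(\p_t^6J^{-2}),_k=-\p_t^7v^i+(\text{bounded})\,\p_t^6J^{-2}+\rho_0\,(\text{l.o.t.})$, and inverting the invertible matrix $a^k_i$ yields $\rho_0D\p_t^6J^{-2}$ pointwise in terms of $\p_t^7v=\p_t^8\eta$ (an $E$-term), of $\p_t^6J^{-2}$ (controlled by the divergence step), and of lower-order quantities; together with $\|\rho_0\p_t^6J^{-2}\|_0\le\|\p_t^6J^{-2}\|_0$ this gives $\|\rho_0\p_t^6J^{-2}\|_1\le M_0+\delta\sup_{t\in[0,T]}E(t)+C\,T\,P(\sup_{t\in[0,T]}E(t))$. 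Summing the two bounds and readjusting constants finishes the proof.

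I expect the main obstacle to be exactly the divergence estimate and its coupling to $\rho_0\p_t^6J^{-2}$: these two quantities are essentially the same object, neither appears directly in $E(t)$, and the required $L^2$ control has to be extracted from energy estimates that only see $\rho_0$- or $\rho_0^2$-weighted versions carrying one extra time derivative. Making the weight-versus-derivative bookkeeping close — so that the genuinely top-order normal derivatives land in the $\delta\sup_{t\in[0,T]}E(t)$ term and everything else in $M_0+C\,T\,P(\sup_{t\in[0,T]}E(t))$, with the elliptic equation (\ref{ce_elliptic}) supplying the crucial trade of one $\rho_0$-weight for one space derivative — is where the real work lies.
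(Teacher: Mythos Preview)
Your overall architecture --- Hodge estimate for $\p_t^5v$, curl from Proposition~\ref{curl_est}, tangential trace from Proposition~\ref{additional_2d}, and the elliptic form (\ref{ce_elliptic}) to recover $\rho_0 D\p_t^6J^{-2}$ --- matches the paper's, and you have correctly isolated the difficulty: the coupling between $\|\operatorname{div}\p_t^5v\|_0\sim\|\p_t^6J^{-2}\|_0$ (unweighted) and $\|\rho_0 D\p_t^6J^{-2}\|_0$. But your proposed resolution of that coupling does not close, and this is a genuine gap.

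Concretely: from your schematic identity $\rho_0 a^k_i(\p_t^6J^{-2}),_k=-\p_t^7v^i+2a^3_i\,\p_t^6J^{-2}+\rho_0(\text{l.o.t.})$ you get $\|\rho_0 D\p_t^6J^{-2}\|_0\le C\|\p_t^7v\|_0+C\|\p_t^6J^{-2}\|_0+(\text{l.o.t.})$, and from the weighted embedding (\ref{w-embed}) you get $\|\p_t^6J^{-2}\|_0\le C\|\rho_0\p_t^6J^{-2}\|_1$. Combining these yields $\|\rho_0\p_t^6J^{-2}\|_1\le (\text{good})+C'\|\rho_0\p_t^6J^{-2}\|_1$ with a constant $C'$ that is neither $O(T)$ nor $O(\delta)$ --- it comes from the fixed coefficient $2$ in front of $a^3_i\p_t^6J^{-2}$, which is dictated by $\gamma=2$ and $\rho_0,_3=-1$. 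Nothing in your bookkeeping makes $C'<1$. Neither quantity is directly in $E(t)$ with a small prefactor available, so $\delta$-absorption is not an option either.

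The paper breaks the circularity by \emph{not} separating the two terms. It takes the $L^2$ norm of the combination $\rho_0 a^3_i\,\p_t^6J^{-2},_3-2a^3_i\,\p_t^6J^{-2}$ as a whole (this is exactly the $i$-th component of the $\p_t^6$-differentiated equation, after moving the $\bar\p$-pieces to the right). Expanding the square gives $\|\rho_0|a^3_\cdot|\p_t^6J^{-2},_3\|_0^2+4\||a^3_\cdot|\p_t^6J^{-2}\|_0^2$ plus the cross term $-4\int_\Omega\rho_0|a^3_\cdot|^2\,\p_t^6J^{-2}\,\p_t^6J^{-2},_3\,dx$. Writing the integrand as $-2\rho_0|a^3_\cdot|^2[(\p_t^6J^{-2})^2],_3$ and integrating by parts in $x_3$, the derivative hits $\rho_0$ and produces $-2\||a^3_\cdot|\p_t^6J^{-2}\|_0^2$ (since $\rho_0,_3=-1$), a nonnegative boundary term at $x_3=0$, and a remainder $2\int_\Omega\rho_0(|a^3_\cdot|^2),_3(\p_t^6J^{-2})^2\,dx$. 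The net coefficient of the unweighted term is $4-2=2>0$, so one obtains simultaneously
\[
\|\rho_0\p_t^6J^{-2},_3\|_0^2+\|\p_t^6J^{-2}\|_0^2\le M_0+\delta\sup E+CTP(\sup E)+C\int_\Omega\rho_0|\p_t^6J^{-2}|^2\,dx,
\]
and the last integral is genuinely lower order (split it by Young's inequality into $\delta\|\p_t^6J^{-2}\|_0^2$ and $C_\delta\|\rho_0\p_t^6J^{-2}\|_0^2$, the latter handled by the fundamental theorem of calculus in $t$). This Hardy-type step --- squaring the full degenerate operator and integrating the cross term by parts in the normal direction --- is the missing idea; once you have $\|\p_t^6J^{-2}\|_0$ under control, your divergence step and the rest of your argument go through exactly as written.
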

\begin{proof}
We  write (\ref{ce0.a}) as $ v^i_t + 2 A^k_i (\rho_0 J^{-1} ),_k =0$, which we rewrite as 
\begin{equation}\label{eq400_2d}
v^i_t + \rho_0 a^k_i J ^{-2},_k - 2 a^3_i J^{-2} =0 \,.
\end{equation} 
We have used the fact that $ \rho_0,_ \beta =0$  for $\beta=1,2$, and
$ \rho_0 ,_3=-1$.  Letting $\p_t^6$ act on equation (\ref{eq400_2d}), we have that
\begin{align*} 
\rho_0a^3_i \p_t^6 J^{-2} ,_3 - 2 a^3_i \p_t^6 J^{-2}  
&= - \p_t^7 v^i - \rho_0\p_t^6 (a^\beta_iJ^{-2} ,_\beta  ) - (\p_t^6 a^3_i)[ -2J^{-2}  + \rho_0 J^{-2} ,_3] \\
& \qquad\qquad  + \sum_{a=1}^5 c_a \p_t^a a^3_i \p_t^{6-a} [ -2 J^{-2}  + \rho_0J^{-2} ,_3] \,.
\end{align*} 

According to Proposition \ref{prop5_2d} and \ref{additional_2d}, 
$$
 \sup_{t \in [0,T]} \Bigl( \| \p_t^7 v(t)\|^2_0 +  \|\rho_0 \bar \p D \p_t^5 v(t)\|^2_0\Bigr) \le M_0 + \delta \sup_{t \in [0,T]} E(t) + C\,T\, P( \sup_{t \in [0,T]} 
 E(t)) \,,
$$
and since (\ref{a3i}) shows that $a^3_i $ is quadratic in $\bar \p\eta$, we see that for all $t \in [0,T]$,
$$
 \left\|  [\rho_0 a^3_i \p_t^6 J^{-2} ,_3 - 2 a^3_i \p_t^6J^{-2}  ](t)
\right\|^2_0 \le M_0 + \delta  {\sup_{t\in[0,T]}} E(t) + C \, T\, P({\sup_{t\in[0,T]}} E(t)) \,.  
$$
It follows that 
\begin{align*} 
& \|  \rho_0 |a^3_ \cdot| \p_t^6 J^{-2} ,_3 (t) \|^2_0
 + 4\| |a^3_ \cdot| \, \p_t^6 J^{-2}  (t)\|^2_0 - 4 \int_ \Omega  \rho_0 | a^3_ \cdot |^2 \p_t^6 J^{-2}  \p_t^6J^{-2} ,_3 \, dx 
\\
& \qquad \qquad \qquad
\le
M_0 + \delta  {\sup_{t\in[0,T]}} E(t) + C \, T\, P({\sup_{t\in[0,T]}} E(t)) \,.
\end{align*} 
We assume that our solution is sufficiently smooth so that $ \p_t^6 J^{-2}  (t) \in H^2( \Omega)$, and in particular
that  $ [(\p_t^6 J^{-2}  )^2],_3$ is well-defined and integrable.  As such,
we write\footnote{Jang \& Masmoudi \cite{JaMa2008b} have counterexamples to the obtained inequality when $ J^{-2} $ is not 
sufficiently smooth.  It is important that the function $ J^{-2} $ has greater regularity  than the desired a priori estimate indicates, and
in particular, as we noted, $[(\p_t^6 J^{-2}  )^2],_3$ must be well-defined and integrable.}
\begin{align*} 
-4 \int_\Omega \rho_0 | a^3_ \cdot |^2 \p_t^6J^{-2}  \p_t^6 J^{-2} ,_3 \, dx 
&=
-2  \left\|  |a^3_ \cdot |\, \p_t^6 J^{-2} (t)\right\|^2_0
+2 \int_\Omega \rho_0 (| a^3_ \cdot |^2 ),_3 (\p_t^6 J^{-2}  )^2 \, dx  \\
& \qquad \qquad + 4 \int_{\{x_3=0\}} | \p_t^6 J^{-2} |^2 dx_1dx_2 \,,
\end{align*} 
so that together with our previous inequality,
\begin{align*} 
& \| \rho_0 \p_t^6 J^{-2},_3 (t)\|^2_0 + \| \p_t^6 J^{-2} (t)\|^2_0
\\
& \qquad \qquad \qquad
\le
M_0 + \delta  {\sup_{t\in[0,T]}} E(t) + C \, T\, P({\sup_{t\in[0,T]}} E(t)) 
+C \int_{\Omega} \rho_0 |\p_t^6 J^{-2} |^2 \, dx \,.
\end{align*} 
Since $\rho_0 \bar \p \p_t^6 J^{-2} (t)$ is already estimated by Proposition \ref{additional_2d}, then
\begin{align*} 
& \| \rho_0 \p_t^6 J^{-2}(t)\|^2_1 + \| \p_t^6 J^{-2} (t)\|^2_0
\\
& \qquad \qquad \qquad
\le
M_0 + \delta  {\sup_{t\in[0,T]}} E(t) + C \, T\, P({\sup_{t\in[0,T]}} E(t)) 
+C \int_{\Omega} \rho_0 |\p_t^6 J^{-2} |^2 \, dx \,.
\end{align*} 

We use Young's inequality and the fundamental theorem of calculus (with respect to $t$) for the last integral to find that for $ \delta >0$
\begin{align*} 
C\int_{\Omega^+} \rho_0 \p_t^6 J^{-2} \, \p_t^6 J^{-2} \, dx 
& \le \delta \left\| \p_t^6 J^{-2} (t)\right\|^2_0 + C_ \delta  \left\|  \rho_0 \p_t^5 D v  (t)\right\|^2_0 + M_0 + C\,T\, P( \sup_{t \in [0,T]} E(t))  \\
& \le \delta \left\| \p_t^6 J^{-2} (t)\right\|^2_0 + M_0 +  C\,T\, P( \sup_{t \in [0,T]} E(t)) \,,
\end{align*} 
where we have used the fact that $ \| \rho_0 \p_t^7 Dv(t)\|^2_0$ is contained in the energy function $E(t)$.  By once again
readjusting the constants, we see that on $[0,T]$
\begin{align} 
& \| \rho_0 \p_t^6 J^{-2} (t)\|^2_1
 + \left\| \p_t^6 J^{-2} (t)\right\|^2_0
\le
M_0 + \delta  {\sup_{t\in[0,T]}} E(t) + C \, T\, P({\sup_{t\in[0,T]}} E(t))  \,. \label{eq402_2d}
\end{align} 
With $J_t= a^j_i v^i,_j$, we see that
$$
a^j_i \p_t^5 v^i_j= \p_t^6 J - v^i,_j \p_t^5 a^j_i - \sum_{a=1}^4 c_a \p_t^a a^j_i \p_t^{5-a} v^i,_j
$$
so that using (\ref{eq402_2d}) together with  the fundamental theorem of calculus the estimate for the last two terms on the right-hand side,
we see that
\begin{align} 
\left\| a^j_i \p_t^5v^i,_j (t)\right\|^2_0
\le
M_0+ \delta  {\sup_{t\in[0,T]}} E(t) + C \, T\, P({\sup_{t\in[0,T]}} E(t))  \,, \nonumber
\end{align}
from which it follows that 
\begin{align} 
\left\| \operatorname{div} \p_t^5v (t)\right\|^2_0
\le
M_0 + \delta  {\sup_{t\in[0,T]}} E(t) + C \, T\, P({\sup_{t\in[0,T]}} E(t))  \,. \nonumber
\end{align}
According to Proposition \ref{curl_est}, 
$\| \operatorname{curl} \p_t^5v (t)\|^2_0\le M_0+  C \, T\, P({\sup_{t\in[0,T]}} E(t))$ and with the bound on $\p_t^5v ^\alpha  $ given
by  Proposition \ref{additional_2d}, Proposition \ref{prop1}  provides the estimate
\begin{align*} 
\left\|  \p_t^5v (t)\right\|^2_1
\le M_0 + \delta {\sup_{t\in[0,T]}} E(t) + C \, T\, P({\sup_{t\in[0,T]}} E(t))  \,.
\end{align*}
\end{proof}

Having a good bound for $\p_t^5v(t)$ in $H^1(\Omega)$ we proceed with our bootstrapping.   We let $\p_t^4$ act on equation (\ref{eq400_2d}), so that
\begin{align*} 
\rho_0a^3_i \p_t^4 J^{-2} ,_3 - 2 a^3_i \p_t^4 J^{-2}  
&= - \p_t^5 v^i - \rho_0\p_t^4 (a^\beta_iJ^{-2} ,_\beta  ) - (\p_t^4 a^3_i)[- 2J^{-2}  + \rho_0 J^{-2} ,_3] \\
& \qquad\qquad  + \sum_{a=1}^3 c_a \p_t^a a^3_i \p_t^{4-a} [ -2 J^{-2}  + \rho_0J^{-2} ,_3] \,,
\end{align*} 
with the right-hand side bounded  in $H^1(\Omega)$ by $M_0 + \delta {\sup_{t\in[0,T]}} E(t) + C \, T\, P({\sup_{t\in[0,T]}} E(t))$.
Using the argument   just given, we  conclude that 
$$
\sup_{t \in [0,T]} \bigl( \| v_{ttt}(t) \|^2_2 +  \| \rho_0 \p_t^4 J^{-2} (t)\|^2_2\bigr) \le 
M_0 + \delta {\sup_{t\in[0,T]}} E(t) + C \, T\, P({\sup_{t\in[0,T]}} E(t)) \,.
$$
Next, we let $\p_t^2$ act on equation (\ref{eq400_2d}), so that
\begin{align*} 
\rho_0a^3_i \p_t^2 J^{-2} ,_3 - 2 a^3_i \p_t^2 J^{-2}  
&= - \p_t^3 v^i - \rho_0\p_t^2 (a^\beta_iJ^{-2} ,_\beta  ) - (\p_t^2 a^3_i)[ -2J^{-2}  + \rho_0 J^{-2} ,_3] \\
& \qquad\qquad  + 2( \p_t a^3_i ) \p_t [ -2 J^{-2}  + \rho_0J^{-2} ,_3] \,,
\end{align*} 
with the right-hand side bounded  in $H^2(\Omega)$ by $M_0 + \delta {\sup_{t\in[0,T]}} E(t) + C \, T\, P({\sup_{t\in[0,T]}} E(t))$.
We conclude that
$$
\sup_{t \in [0,T]} \bigl( \| v_t(t) \|^2_3 +  \| \rho_0 \p_t^2 J^{-2} (t)\|^2_3\bigr) \le 
M_0 + \delta {\sup_{t\in[0,T]}} E(t) + C \, T\, P({\sup_{t\in[0,T]}} E(t)) \,.
$$
Finally, this estimate together with equation (\ref{eq400_2d}) shows that
\begin{align*} 
\rho_0a^3_i J^{-2} ,_3 - 2 a^3_i  J^{-2}  
&= -  v_t^i - \rho_0 a^\beta_iJ^{-2} ,_\beta   \,,
\end{align*} 
from which it follows that
\begin{equation}\label{ss777_2d}
\sup_{t \in [0,T]} \bigl( \| \eta(t) \|^2_4 +  \| \rho_0  J^{-2} (t)\|^2_4\bigr) \le 
M_0 + \delta {\sup_{t\in[0,T]}} E(t) + C \, T\, P({\sup_{t\in[0,T]}} E(t)) \,.
\end{equation} 
\subsection{Estimates for $ \operatorname{curl} _\eta v$} 
 As a result of the inequality (\ref{ss777_2d}) and the
identity $ \operatorname{curl}_\eta v = \operatorname{curl} u_0 J^{-1} $, by readjusting the constants if necessary we
have that
$$
\sup_{t \in [0,T]} \bigl( \| \operatorname{curl} _ \eta v(t)\|^2_3 + \| \rho_0 \bar \p^4 \operatorname{curl} _\eta v(t)\|^2_0 \bigr) \le M_0 + \delta \sup_{t \in [0,T]} E(t) + 
C\,T\ P( \sup_{t \in [0,T]} E(t)) \,.
$$
\begin{corollary} \label{cor_curlv_3d}
$$
\sup_{t \in [0,T]} \bigl( \| \operatorname{curl} _ \eta v(t)\|^2_3 + \| \rho_0 \bar \p^4 \operatorname{curl} _\eta v(t)\|^2_0 \bigr) \le M_0 + \delta \sup_{t \in [0,T]} E(t) + 
C\,T\ P( \sup_{t \in [0,T]} E(t)) \,.
$$\end{corollary}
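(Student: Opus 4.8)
The plan is to read the estimate off from the conserved form of the Lagrangian vorticity together with the elliptic bootstrap bound (\ref{ss777_2d}) and the energy estimate of Proposition \ref{prop1_2d}; indeed it is exactly the inequality displayed in the remark immediately preceding the statement, and the reason to record it as a corollary is that both quantities $\|\operatorname{curl}_\eta v(t)\|_3^2$ and $\|\rho_0\bar\p^4\operatorname{curl}_\eta v(t)\|_0^2$ are constituents of $E(t)$ in (\ref{formal}) and are thus needed for the polynomial-inequality argument of Section \ref{subsec_poly}. The one substantive preliminary is the identity itself. I would obtain it by differentiating $a^k_i\,(\operatorname{curl}_\eta v)^i$ in time: the term in which $\p_t$ lands on $\operatorname{curl}_\eta v$ vanishes by the vorticity equation $\operatorname{curl}_\eta v_t=0$ of (\ref{vorticity}), while in the term in which $\p_t$ lands on $a$ one inserts the evolution formula (\ref{a2}) and uses that, by (\ref{ce_vor}), $v_t^i=-2A^k_i(\rho_0 J^{-1}),_k$ is a Lagrangian gradient, so that every remaining summand carries a factor symmetric in two spatial indices contracted against the permutation symbol (either $\eta^q,_{st}$ or $(\rho_0 J^{-1}),_{st}$) and hence vanishes. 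Thus $a^k_i(\operatorname{curl}_\eta v)^i$ is constant in $t$; since $a=\mathrm{Id}$ and $\operatorname{curl}_\eta v=\operatorname{curl}u_0$ at $t=0$, it equals $(\operatorname{curl}u_0)^k$, i.e. $\operatorname{curl}_\eta v = J^{-1}(D\eta)\operatorname{curl}u_0$ (this is the identity written as $\operatorname{curl}_\eta v=\operatorname{curl}u_0\,J^{-1}$ above the statement). In particular $\operatorname{curl}u_0$ is a fixed datum, so $\|\operatorname{curl}u_0\|_3^2+\|\rho_0\bar\p^4\operatorname{curl}u_0\|_0^2\le M_0$.

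For the $H^3(\Omega)$-norm I would estimate $\|J^{-1}(D\eta)\operatorname{curl}u_0\|_3$ using that $H^3(\Omega)$ is a multiplicative algebra stable under composition with functions smooth on $[\tfrac12,\tfrac32]$: since $\tfrac12\le J\le\tfrac32$ by (\ref{assumptions_2d}), $\|J^{-1}\|_3$ is bounded by a polynomial in $\|\eta\|_4$, and together with the fixed-data bound on $\operatorname{curl}u_0$ and with (\ref{ss777_2d}) this gives a bound of the required form after readjusting constants. To extract the smallness, one writes $\operatorname{curl}_\eta v=\operatorname{curl}u_0+(J^{-1}D\eta-\mathrm{Id})\operatorname{curl}u_0$ and uses that the bracketed factor vanishes at $t=0$, so its contribution carries a factor of $T$ and is absorbed into $C\,T\,P(\sup_{t\in[0,T]} E(t))$.

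For the weighted term I would write $J^{-1}=J\cdot J^{-2}$ and apply $\rho_0\bar\p^4$ to the product $J\,J^{-2}\,D\eta\,\operatorname{curl}u_0$. The only genuinely top-order contributions are: (i) $\rho_0\bar\p^4 J^{-2}$ times an $L^\infty(\Omega)$ factor built from $J$, $D\eta$, $\operatorname{curl}u_0$, controlled by $\|\rho_0 J^{-2}\|_4$ from (\ref{ss777_2d}); (ii) $\rho_0\bar\p^4 D\eta$ times an $L^\infty(\Omega)$ factor, controlled by Proposition \ref{prop1_2d} — this is the term that produces the $\delta\sup_{t\in[0,T]} E(t)$ on the right-hand side; and (iii) $\rho_0 J^{-1}(D\eta)\bar\p^4\operatorname{curl}u_0$, bounded by $M_0$ as pure data. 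Every other summand spreads at most three derivatives over at least two factors; using that $\bar\p\rho_0=0$ (so $\rho_0\bar\p^k J^{-2}=\bar\p^k(\rho_0 J^{-2})$ is controlled in $H^{4-k}(\Omega)$ by $\|\rho_0 J^{-2}\|_4$), Hölder's inequality with the exponent pairs already used in the energy estimates, the embeddings $H^{1/2}(\Omega)\hookrightarrow L^3(\Omega)$, $H^1(\Omega)\hookrightarrow L^6(\Omega)$, $H^{3/4}(\Omega)\hookrightarrow L^4(\Omega)$, and (\ref{ss777_2d}), these are all bounded by $M_0+C\,T\,P(\sup_{t\in[0,T]} E(t))$. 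Summing the two estimates and readjusting constants yields the displayed inequality.

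I do not expect a genuine obstacle: the corollary is inexpensive precisely because the hard work — the energy estimates of Propositions \ref{prop1_2d}, \ref{prop5_2d}, \ref{additional_2d} and the elliptic bootstrapping culminating in (\ref{ss777_2d}) — is already in place, and the vorticity identity turns $\operatorname{curl}_\eta v$ into an algebraic expression in $\eta$, $J$ and the fixed datum $\operatorname{curl}u_0$. The only point requiring care is the derivative bookkeeping in the weighted term: one must check that no summand forces $\bar\p^4$ onto more than one uncontrolled factor at once, and it is the deformation-gradient factor $D\eta$ in the identity — suppressed in the shorthand $\operatorname{curl}_\eta v=\operatorname{curl}u_0\,J^{-1}$ — that makes $\|\rho_0\bar\p^4 D\eta\|_0$ the operative quantity, which is exactly what Proposition \ref{prop1_2d} supplies.
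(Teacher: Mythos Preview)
Your route is genuinely different from the paper's proof. Although the remark preceding the corollary invokes the Cauchy identity $\operatorname{curl}_\eta v = J^{-1}(D\eta)\operatorname{curl} u_0$, the paper's actual proof does not use it: instead it works from the integrated form (\ref{curlv_3d}), $\operatorname{curl}_\eta v(t) = \operatorname{curl} u_0 + \int_0^t B(A,Dv)\,dt'$, applies $D^3$ (resp.\ $\rho_0\bar\p^4$), and integrates by parts in time to convert the top-order factor $D^4 v$ into $D^4\eta$ before invoking (\ref{ss777_2d}) (resp.\ Proposition~\ref{prop1_2d}). Both approaches feed on the same two inputs, and your algebraic route is cleaner once the identity is in hand; your derivation of that identity is correct.

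There is, however, a gap in your $H^3(\Omega)$ step. The algebra bound $\|J^{-1}(D\eta)\operatorname{curl} u_0\|_3 \le P(\|\eta\|_4)\,M_0^{1/2}$ with $P$ a genuine polynomial does \emph{not} combine with (\ref{ss777_2d}) by ``readjusting constants'': substituting $\|\eta\|_4^2 \le M_0 + \delta\sup E + CTP$ into a power $\ge 2$ produces a term like $\delta^2(\sup E)^2$ that is neither $O(\delta\sup E)$ nor $O(T)$. Your fallback --- writing $J^{-1}D\eta - \mathrm{Id} = \int_0^t \partial_s(J^{-1}D\eta)\,ds$ to extract a factor of $T$ --- requires $\|\partial_s(J^{-1}D\eta)\|_3$, hence $\|v\|_4$, which is not controlled by $E(t)$. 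The repair is easy: use a Kato--Ponce/Moser-type product estimate $\|fg\|_3 \le C(\|f\|_3\|g\|_{L^\infty} + \|f\|_{L^\infty}\|g\|_3)$ together with the $L^\infty$ bounds from (\ref{assumptions_2d}) so that the dependence on $\|\eta\|_4$ is \emph{linear}; then (\ref{ss777_2d}) substitutes cleanly. The paper's integration-by-parts-in-time achieves exactly this linearization automatically, which is what that less direct route buys. Your treatment of the weighted term is fine.
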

\begin{proof}
Letting $D^3$ act on the identity (\ref{curlv_3d}) for $ \operatorname{curl} _\eta v$, we see that the highest-order term scales like
$$
D^3 \operatorname{curl} u_0 + \int_0^t D^4 v \, Dv\, A \, A dt' \,.
$$
We integrate by parts to see that the highest-order contribution to $ D^3 \operatorname{curl} _\eta v(t)$ can be written as
$$
D^3 \operatorname{curl} u_0 - \int_0^t D^4 \eta \, [Dv\, A \, A]_t dt'  + D^4\eta(t)\, Dv(t)\, A(t)\, A(t) \,,
$$
which, according to (\ref{ss777_2d}), has $L^2(\Omega)$-norm bounded by $$ M_0(\delta) + \delta  {\sup_{t\in[0,T]}} E(t) +
C \, T\, P({\sup_{t\in[0,T]}} E(t))\,,$$ after readjusting the constants; thus, the inequality for the $H^3(\Omega)$-norm of $ \operatorname{curl} _\eta
v(t)$ is proved

The same type of analysis works for the weighted estimate.
After integration by parts in time,  the highest-order term in the expression for $\rho_0 \bar \p^4 \operatorname{curl} _\eta v(t)$ scales like
$$
\rho_0 \bar \p^4 \operatorname{curl} u_0 - \int_0^t \rho_0 \bar \p^4 D \eta \, [Dv\, A \, A]_t dt'  + \rho_0 \bar \p^4D\eta(t)\, Dv(t)\, A(t)\, A(t) \,.
$$
Hence, the inequality (\ref{energy1_2d}) shows that the weighted estimate holds as well.
\end{proof}

\subsection{The a priori bound}
Summing the inequalities provided by our energy estimates, the additional elliptic estimates, and the estimates for 
$ \operatorname{curl} _\eta v$ shows that
$$
\sup_{t \in [0,T]} E(t) \le M_0 + C\, T\, P( \sup_{t \in [0,T]} E(t)) \,.
$$
According to our polynomial-type inequality given in Section \ref{subsec_poly}, by taking $T>0$ sufficiently small,
we have the a priori bound
$$
\sup_{t \in [0,T]} E(t) \le 2M_0\,.
$$

\vspace{.1 in}

\noindent
{\bf Acknowledgments.}
SS was supported by the National Science Foundation under
grant DMS-0313370.  HL was supported by the National Science Foundation under
grant DMS-0801120

\end{document}